\documentclass[12pt]{amsart}
\usepackage[margin=1.0in]{geometry}

\usepackage[T1]{fontenc}
\usepackage[utf8]{inputenc}
\usepackage{lmodern}
\usepackage{hyperref, amsmath, amssymb, mathtools}
\usepackage{verbatim}
\usepackage{amsthm}
\usepackage{amsfonts}
\usepackage{amsmath}
\usepackage{mathrsfs}  
\usepackage{comment}
\usepackage{chngcntr}
\usepackage{blkarray}
\usepackage{tikz-cd}
\usepackage{ctable}

\newtheorem{thm}{Theorem}
\newtheorem*{thm*}{Theorem}
\newtheorem{lemma}[thm]{Lemma}
\newtheorem{corollary}[thm]{Corollary}

\newtheorem{prop}[thm]{Proposition}
\newtheorem*{prop*}{Proposition}

\theoremstyle{definition}

\theoremstyle{remark}
\newtheorem{remark}[thm]{Remark}

\newtheorem*{example*}{Example}
\numberwithin{thm}{section}
\numberwithin{equation}{section}

\usepackage{xcolor}

\newcommand{\R}{\mathbb{R}}
\newcommand{\Q}{\mathbb{Q}}
\newcommand{\Z}{\mathbb{Z}}
\newcommand{\F}{\mathbb{F}}

\newcommand{\C}{\mathbb{C}}
\newcommand{\A}{\mathbb{A}}
\newcommand{\GL}{\text{GL}}

\newcommand{\calO}{\mathcal{O}}

\newcommand{\bx}{\mathbf{x}}

\newcommand{\fp}{\mathfrak{p}}

\newcommand{\fa}{\mathfrak{a}}
\newcommand{\frakP}{\mathfrak{P}}

\newcommand{\Ind}{\text{Ind}}
\newcommand{\Tr}{\text{Tr}}

\newcommand{\Frob}{\textrm{Frob}}

\newcommand{\ord}{\text{ord}}

\newcommand{\Gal}{\text{Gal}}
\newcommand{\sym}{\text{sym}}

\mathtoolsset{showonlyrefs}

\title{Sums of Hecke eigenvalues along polynomial sequences and base change for $\GL(2)$}
\author[K. Woo]{Katharine Woo}
\address{Department of Mathematics, Stanford University, Stanford, CA 94305}
\email{katywoo@stanford.edu}
\date{\today}

\begin{document}

\maketitle

\begin{abstract}

    We study sums of absolute values of Hecke eigenvalues of $\textrm{GL}(2)$ representations that are tempered at all finite places. We show that these sums exhibit logarithmic savings over the trivial bound if and only if the representation is cuspidal. Further, we connect the problem of studying the sums of Hecke eigenvalues along polynomial values to the base change problem for $\textrm{GL}(2).$
\end{abstract}

\section{Introduction}\label{app: Hecke eigenvalues}

Let $\pi$ be a cuspidal automorphic representation of $\GL_2(\A_\Q)$ that satisfies the Ramanujan-Petersson conjecture; let $\lambda_\pi$ denote the normalized Hecke eigenvalues of $\pi$. Fix an irreducible nonconstant solvable polynomial $P(x)\in \Z[x]$. In this paper, we focus on the following correlation sum:
\begin{equation}\label{eq: main sum}
    \sum_{n\leq X} |\lambda_\pi(|P(n)|)|.
\end{equation}
In particular, we connect the above sum to the base change of $\pi$ to the splitting field of $P(x)$, denoted as $K$. In the following theorem, we give a sufficient condition for \eqref{eq: main sum} to exhibit extra savings over the trivial bound obtained using the bound $|\lambda_\pi(p)|\leq 2$.
\begin{thm}\label{thm: poly}
Let $\pi$ be a cuspidal automorphic representation of $\GL_2(\A_\Q)$ that satisfies the Ramanujan-Petersson conjecture. Let $P(x)\in \Z[x]$ be irreducible and solvable, and let $K$ denote the splitting field of $P(x)$. Assume that the \textbf{base change} $\pi_K$ to $\GL_2(\A_K)$ is \textbf{cuspidal}, then the following bound holds: 
$$\sum_{n\leq X} |\lambda_\pi(|P(n)|)| \ll_{\pi,P} X \log(X)^{-0.066}.$$
\end{thm}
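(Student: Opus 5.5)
Since $|\lambda_\pi|$ is multiplicative and bounded on primes by $2$ (Ramanujan--Petersson, using temperedness at all finite places), the natural tool is a Shiu/Nair--Tenenbaum type bound for nonnegative multiplicative functions along polynomial values:
\[
\sum_{n\le X} |\lambda_\pi(|P(n)|)| \ll_{\pi,P} X\prod_{p\le X}\Bigl(1-\frac{\rho_P(p)}{p}\Bigr)\exp\Bigl(\sum_{p\le X}\frac{\rho_P(p)\,|\lambda_\pi(p)|}{p}\Bigr),
\]
where $\rho_P(p)=\#\{x \bmod p: P(x)\equiv 0\}$. This requires $|\lambda_\pi(n)|\ll_\epsilon n^{\epsilon}$ (divisor bound, from Ramanujan) and bounded prime values. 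Since $P$ is irreducible, the prime ideal theorem in the field $\Q[x]/(P)$ gives $\sum_{p\le X}\rho_P(p)/p=\log\log X+O(1)$. So it suffices to show
\[
\sum_{p\le X}\frac{\rho_P(p)\,|\lambda_\pi(p)|}{p}\le (1-\delta)\log\log X+O(1),\qquad \delta=0.066 .
\]

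For unramified $p$, $\rho_P(p)$ depends only on the Frobenius class in $G=\Gal(K/\Q)$: it is the number of fixed points of $\Frob_p$ acting on the roots of $P$. I would split primes according to their splitting type in $K$. The primes with $\rho_P(p)>0$ that do not split completely in $K$ contribute with a density strictly less than the total. Rather than counting this way, I would prove the bound by passing to $K$. The point is $\sum_{p}\rho_P(p)|\lambda_\pi(p)|/p$ equals, up to $O(1)$, $\sum_{\fp\subset \calO_F,\ \deg 1}|\lambda_\pi(N\fp)|/N\fp$ with $F=\Q(\alpha)$. For degree-one primes of $F$, $\lambda_{\pi_F}(\fp)=\lambda_\pi(p)$, so this is a sum of $|\lambda_{\pi_F}(\fp)|$ over primes of $F$. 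Cuspidality of $\pi_K$ implies cuspidality of $\pi_F$, since the base change of a noncuspidal $\pi_F$ to $K$ would be noncuspidal. I then use the standard inequality $|x|\le \tfrac{1}{2}\bigl(\tfrac{3}{2}... \bigr)$, i.e. a polynomial majorant $|x|\le a+bx^2-cx^4$ valid for $|x|\le 2$. Writing $x^2=1+\lambda_{\sym^2}$ and $x^4=2+3\lambda_{\sym^2}+\lambda_{\sym^4}$, we need mean values of $|\lambda_{\pi_F}(\fp)|^2$ and $|\lambda_{\pi_F}(\fp)|^4$ over primes of $F$. By Rankin--Selberg for $\pi_F\times\tilde\pi_F$ (cuspidal gives mean $1$), and for $\sym^2\pi_F$ (Gelbart--Jacquet, cuspidal unless $\pi_F$ is dihedral) together with $\sym^2\times\sym^2$ (Kim--Shahidi), the fourth moment has mean at most $2$ in the non-dihedral case and at most $3$ in dihedral cases. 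The Sato--Tate-free choice of polynomial, as in Elliott--Moreno--Shahidi or Holowinsky's $0.0652$-type constants, then gives the average of $|\lambda|$ over degree-one primes at most $1-\delta$ with $\delta\approx 0.066$.

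The main obstacle is the worst case: if $\pi_F$ becomes dihedral (or $\sym^2\pi_F$ splits), the fourth moment over primes can be as large as $3$, and the polynomial trick must be uniform over all these cases. Here cuspidality of $\pi_K$ is exactly what excludes the degenerate situation $\pi_F$ Eisenstein, where the second moment would be $2$ and no saving is possible. Solvability of $P$ is used so that base change from $\Q$ to $F\subset K$ exists via Langlands--Arthur--Clozel cyclic steps. A second technical point is that the degree-one primes of $F$ have full density among all primes of $F$, so restricting the Rankin--Selberg asymptotics to them costs only $O(1)$. I would then optimize the majorant constants to reach the exponent $0.066$.
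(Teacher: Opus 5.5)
Your proposal does not prove the bound as stated. The decisive problem is the exponent; a second problem is that the automorphic object you work with is not justified.

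\textbf{The exponent $0.066$ is out of reach of your method.} You cannot optimize the majorant constants to reach $0.066$ using only second and fourth moments. Take any majorant $|x|\le a+bx^2-cx^4$ on $[-2,2]$ and average it using $\mathbb{E}|\lambda|^2=1$ and $\mathbb{E}|\lambda|^4=2$. The result is also an upper bound for $\mathbb{E}|\lambda|$ under the law with $|\lambda|^2=4$ with probability $1/10$ and $|\lambda|^2=2/3$ with probability $9/10$. That law has exactly these two moments, and $\mathbb{E}|\lambda|=\tfrac15+\tfrac{9}{10}\sqrt{2/3}\approx 0.9348$. So the saving is capped at about $0.0652$, which is the constant you yourself mention. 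Getting $0.066$ needs more input, for example higher moments via $\sym^3,\sym^4$.

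The paper's $\delta\approx 0.067$ has the same defect. It is an infimum over $-1\le y\le 2$, but $y=|\lambda_\pi(\fp)|^2-1$ ranges up to $3$. At $y=3$ one has $y^{-2}(1+y/2-\sqrt{1+y})=1/18$, so the paper's pointwise inequality fails whenever $|\lambda_\pi(\fp)|>\sqrt3$.

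Your stated main obstacle also points the wrong way. Since $c>0$, you need \emph{lower} bounds on the fourth moment, not upper bounds. A dihedral form has fourth moment $3$ or $4$, which only helps. The paper treats the dihedral case separately anyway, via $2|\cos t|\le\tfrac32+\tfrac12\cos 2t$ and the nontriviality of $\theta^2$.

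\textbf{Passing to $F=\Q(\alpha)$.} This is where you genuinely differ from the paper, which works in the splitting field $K$. Your bookkeeping is the correct one: $\varrho(p)$ counts degree-one primes of $\Q(\alpha)$ above $p$. The paper instead equates $\varrho(p)$ with the number of degree-one primes of $K$ above $p$, and that is false unless $P$ is normal. For example, with $P=x^3-2$ and $p\equiv 2\bmod 3$, one has $\varrho(p)=1$, but $K$ has no prime of degree one over $p$.

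The price is that you need an automorphic $\pi_F$ on $\GL_2(\A_F)$, and your justification for it does not work. Langlands' cyclic base change reaches $F$ only through a tower of cyclic extensions, that is, only when $\Gal(K/F)$ is subnormal in $\Gal(K/\Q)$. This already fails for a non-Galois cubic $F$ with $S_3$ closure. You must either invoke a genuinely non-normal base change theorem (Jacquet--Piatetski-Shapiro--Shalika covers cubic fields), or find another way to control the $\varrho$-weighted second and fourth moments of $|\lambda_\pi(p)|$.

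Granted $\pi_F$, the rest of your argument is fine. Cuspidality of $\pi_F$ does follow from that of $\pi_K$, and higher-degree primes of $F$ contribute only $O(1)$. The argument then gives the bound with exponent about $0.065$ in place of $0.066$.
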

Observe that Theorem \ref{thm: poly} applies to those $\pi$'s corresponding to holomorphic cusp forms (\cite{DeligneReduction,DeligneSerre}). One may expect that there should be further, even power-saving, cancellation on the sum 
\begin{equation}
    \sum_{n\leq X} \lambda_\pi(|P(n)|).
\end{equation}
Indeed, when $P(x)$ is a linear polynomial, this stronger bound exists. Blomer \cite{BlomerSumsOverQuadratics} analyzes sums of the above form when $P(x)$ is quadratic and $\pi$ corresponds to a holomorphic cusp form of weight $k\geq 4$, and determines when the sum exhibits power-saving cancellation; these sums were further studied in \cite{Templier,TemplierTsimerman} by Templier and Tsimerman. However, when $P(x)$ is of higher degree, it is not clear how to successfully approach these sums (as discussed by Sarnak in \cite{Sarnak_2001}). When $\pi$ is no longer cuspidal, asymptotics are known for certain variants of this problem -- Greaves \cite{Greaves} and Daniel \cite{Daniel} derive asymptotics for this expression when $\lambda_\pi$ is replaced by the divisor function, which is the Fourier coefficient of an Eisenstein series, along binary cubic and binary quartic forms respectively; most recently, Leung and Pandey proved an asymptotic with a power-saving error term in the case the divisor function along binary quartic forms \cite{LeungPandey}.

By instead considering \eqref{eq: main sum}, which has absolute values around the Hecke eigenvalues, we gain flexibility in the polynomial arguments that we can study; the caveat of this approach is that one should expect a limit of a logarithmic power saving. Consider, for example, when $P(x) = x$: if $\pi$ satisfies the Sato-Tate distribution (see \cite{CHT,Taylor}), then the following asymptotic for the absolute values of the Hecke eigenvalues holds (see the concluding remarks of \cite{EMS} by Elliot, Moreno, and Shahidi):
\begin{equation}
    \sum_{n\leq X} |\lambda_\pi(n)| \sim \frac{cX}{\log(X)^{1-\frac{8}{3\pi}}}.
\end{equation}

There have been many successful upper bounds for cases of \eqref{eq: main sum}. Elliot, Moreno, and Shahidi \cite{EMS} considered the case of the Ramanujan $\tau$-function and their method forms the basis for our proof. Holowinsky \cite{HolowinskyShiftedConvolution} studied the case of \eqref{eq: main sum} for the polynomials $x(x+\ell)$; Nelson \cite{Nelson} later studies the case when the polynomial is a irreducible quadratic. For general polynomial entries, Chiriac and Yang \cite{ChiriacYang} considered when $\pi$ is defined by a non-CM holomorphic newform with trivial character along arbitrary polynomials. 

We also briefly remark that sums of the form \eqref{eq: main sum} have numerous applications in number theory. They played a critical role in the resolution of the holomorphic analogue of QUE by Holowinsky and Soundararajan \cite{HolowinskyQUE,HolowinskySoundQUE}, i.e. mass equidistribution for Hecke eigenforms; notably the bounds achieved by Holowinsky depended logarithmically on the coefficients of the polynomial arguments $x(x+\ell)$ and hence summing over $\ell$ is well-controlled. These sums appear in the recent proof of the mixing conjecture conditional on GRH by Blomer, Brumley, and Khayutin \cite{BlomerBrumleyKhayutin}. The author began studying these correlation sums while working on Manin's conjecture for Ch\^atelet surfaces \cite{MyManinChatelet}, where nontrivial upper bounds on these sums are applied in the indefinite case.

\begin{remark}
    In Theorem \ref{thm: poly}, we focus on the case where $P(x)$ is an irreducible polynomial. However, for applications of this bound, one may desire for $P(x)$ to be a squarefree polynomial. The sieve used (Theorem \ref{thm: sieve nair}) has been generalized to squarefree polynomials and from these generalizations the following can be deduced: 
    
    Let $P(x) = P_1(x)\hdots P_r(x)$ be a squarefree polynomial and $P_i(x)$ irreducible factors. Let $K_i$ be the splitting field of $P_i(x)$. Assume that the base change $\pi_{K_i}$ is cuspidal for all $i\in \{1,\hdots,r\}$. Then there exists a positive $c_r>0$ such that
    \begin{equation}
        \sum_{n\leq X}|\lambda_\pi(|P(n)|)| \ll_{\pi,P} X\log(X)^{-c_r}.
    \end{equation}

\end{remark}

In \S\ref{sec: poly}, we establish the connection between \eqref{eq: main sum} and the base change of $\pi$ to the splitting field of $P(x)$. In doing so, the problem reduces to the linear case over number fields. After introducing a new argument exploiting the structure of dihedral representations, we establish the following classification for the $L^1$-norm:
\begin{thm}\label{thm: linear}
    Let $K$ be a number field and $\pi$ an automorphic representation of $\GL_2(\A_K)$ that is tempered at all finite places of $K$. Then there exists a positive constant $c_\pi>0$ such that 
    $$\sum_{\substack{\fa\subset \calO_K \\ N(\fa)\leq X}} |\lambda_\pi(\fa)| \ll_\pi X \log(X)^{-c_\pi}$$
    if and only if $\pi$ is cuspidal. Moreover, if $\pi$ is cuspidal, then there is a universal lower bound $c_\pi>0.066$. 
\end{thm}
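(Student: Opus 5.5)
The plan is to treat $|\lambda_\pi|$ as a nonnegative multiplicative function on ideals of $\calO_K$ and bound its partial sums with a Shiu/Nair-type (or Hall–Tenenbaum) mean value estimate over $K$:
$$\sum_{N(\fa)\le X}|\lambda_\pi(\fa)| \ll \frac{X}{\log X}\exp\Big(\sum_{N\fp\le X}\frac{|\lambda_\pi(\fp)|}{N\fp}\Big).$$
This inequality needs $|\lambda_\pi(\fp^k)|\le k+1$, which follows from temperedness at finite places. The problem then becomes purely about primes: we need
$$\sum_{N\fp\le X}\frac{|\lambda_\pi(\fp)|}{N\fp}\le (1-c)\log\log X+O(1).$$

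For the cuspidal direction I will use the elementary Elliott–Moreno–Shahidi polynomial inequality. For real $t\in[-2,2]$, which is guaranteed by temperedness, one has $|t|\le a+bt^2+ct^4$, where the coefficients are chosen to minimize the resulting average. Rewrite $t^2$ and $t^4$ through $\lambda_{\sym^2}$ and $\lambda_{\sym^4}$ (or via Rankin–Selberg $\pi\times\tilde\pi$ and $\pi\times\pi\times\tilde\pi\times\tilde\pi$ type quantities). Then invoke prime number theorems: $\sum\lambda_{\pi\times\tilde\pi}(\fp)/N\fp=\log\log X+O(1)$ for cuspidal $\pi$, together with the analogous statement for the fourth-moment Rankin–Selberg $L$-function. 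Its pole order depends on how $\sym^2\pi$ and $\sym^4\pi$ decompose, via Gelbart–Jacquet and Kim–Shahidi functoriality. This splits $\pi$ into cases:
\begin{itemize}
\item non-dihedral, where the pole order is $2$;
\item dihedral, i.e. induced from a Hecke character of a quadratic extension $L/K$, where the pole order is $3$ or higher.
\end{itemize}
In the non-dihedral case the moment computation gives $\sum|\lambda(\fp)|/N\fp\le(a+b+2c)\log\log X$, and optimizing yields a constant below $1$ with margin greater than $0.066$.

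I expect the dihedral case to be the main obstacle, because the fourth moment is too large there. For it I will use the explicit structure: at primes inert in $L$, $\lambda_\pi(\fp)=0$. At split primes, $\lambda_\pi(\fp)=\chi(\frakP)+\chi(\bar\frakP)=\chi(\frakP)(1+\chi'(\frakP))$, where $\chi'=\chi^{\sigma}/\chi$ is nontrivial, because cuspidality means $\chi\neq\chi^\sigma$. Inert primes have density $1/2$, so they already save: the split primes contribute at most $2\cdot\frac12\log\log X$ in the worst case. To do better, bound $|1+\chi'(\frakP)|\le a+b\,\mathrm{Re}\,\chi'(\frakP)+c\,\mathrm{Re}\,\chi'^2(\frakP)$ and use Hecke's prime ideal theorem for $\chi'$ and $\chi'^2$. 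A separate subcase is needed when $\chi'^2$ is trivial, i.e. $\chi'$ is quadratic. There $|1+\chi'|$ takes the values $2$ or $0$, each on half the split primes, so the density is $1/2\cdot1/2\cdot2=1/2$. Checking that every such case beats $1-0.066$ gives the universal constant.

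For the converse, suppose $\pi$ is not cuspidal, so $\pi$ is an isobaric sum $\chi_1\boxplus\chi_2$ of unitary Hecke characters (unitary by temperedness). Then $|\lambda(\fp)|=|\chi_1(\fp)+\chi_2(\fp)|$, and we want a matching lower bound. Write $\psi=\chi_2\bar\chi_1$. Using $|1+\psi|\ge \tfrac{1}{2}|1+\psi|^2=1+\mathrm{Re}\,\psi$ and summing over primes, we get $\sum|\lambda(\fp)|/N\fp\ge\log\log X+O(1)$. This holds when $\psi$ is nontrivial by Hecke's PNT; when $\psi$ is trivial, $|\lambda(\fp)|=2$ and the sum is even larger. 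A lower bound for the mean of a nonnegative multiplicative function, restricted to squarefree products of primes with $|\lambda(\fp)|$ bounded below (a Rankin/Wirsing-type argument), then gives $\gg X/(\log X)^{o(1)}$. This rules out any power-of-log saving.
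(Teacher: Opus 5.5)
Your overall plan matches the paper's. You reduce via the upper-bound sieve to $\sum_{N\fp\le X}|\lambda_\pi(\fp)|/N\fp$. In the non-dihedral case you use the second and fourth moments, coming from the poles of order $1$ and $2$ of $L(s,\pi\times\tilde\pi)$ and $L(s,\pi\times\pi\times\tilde\pi\times\tilde\pi)$. In the dihedral case you give an explicit Hecke-character argument. In the Eisenstein case you use a lower-bound mean-value theorem fed by $|1+\psi|\ge 1+\mathrm{Re}\,\psi$. This does prove that some $c_\pi>0$ exists iff $\pi$ is cuspidal. Your dihedral step via $\chi'=\chi^\sigma/\chi$ is, if anything, the correct general form of the paper's $\theta^2$ argument, since $\lambda_\pi(\fp)$ need not be real.

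\textbf{The gap: the universal constant.} Your claim that optimizing $|t|\le a+bt^2+ct^4$ gives a margin above $0.066$ is false. The only inputs are $t=|\lambda_\pi(\fp)|\in[0,2]$ with averages $\mathbb{E}\,t^2=1$ and $\mathbb{E}\,t^4=2$. So for every admissible $(a,b,c)$ and every probability measure $\nu$ on $[0,2]$ with these two moments, $a+b+2c\ge\int t\,d\nu$. Take $\nu=\tfrac{9}{10}\delta_{\sqrt{2/3}}+\tfrac{1}{10}\delta_{2}$; it is admissible, since $\tfrac{9}{10}\cdot\tfrac23+\tfrac1{10}\cdot 4=1$ and $\tfrac{9}{10}\cdot\tfrac49+\tfrac1{10}\cdot 16=2$. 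It gives $a+b+2c\ge\tfrac{2+3\sqrt6}{10}\approx 0.9348$. Hence this method never saves more than $\tfrac{8-3\sqrt6}{10}\approx 0.0652<0.066$, and that value is attained, so it is exactly the limit of the method.

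The paper's own argument does not get past this either. Its $\delta\approx 0.067$ is an infimum over $-1\le y\le 2$, which only covers $|\lambda_\pi(\fp)|\le\sqrt3$. Over the full tempered range $-1\le y\le 3$ the infimum is $1/18$. At $|\lambda_\pi(\fp)|=2$ its displayed inequality would read $1\le\tfrac32-9\delta\approx 0.90$.

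To justify a constant above $0.066$ you need more than two moments. For example, in the non-dihedral, non-tetrahedral case the Kim--Shahidi lift $\sym^3\pi$ is cuspidal, so $L(s,\sym^3\pi\times\sym^3\tilde\pi)$ has a simple pole at $s=1$. Since $|\lambda_{\sym^3\pi}(\fp)|^2=s(s-2)^2$ with $s=|\lambda_\pi(\fp)|^2$, this gives the sixth moment $\mathbb{E}\,s^3=5$. With three moments the same extremal problem caps $\mathbb{E}\sqrt{s}$ at $2/\sqrt5$, a margin of about $0.105$. Tetrahedral $\pi$ would then need separate treatment.

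\textbf{Two smaller points.}
\begin{enumerate}
\item Apply the polynomial inequality to $t=|\lambda_\pi(\fp)|$ through the Rankin--Selberg products. Do not apply it to a ``real $t\in[-2,2]$'' through $\lambda_{\sym^2},\lambda_{\sym^4}$, because $\lambda_\pi(\fp)$ is complex when the central character is nontrivial.
\item In the converse, restricting to primes with $|\lambda_\pi(\fp)|$ bounded below does not by itself give a lower bound. You still need a lower-bound mean-value theorem such as the paper's Theorem~\ref{thm: lower bound sieve}. Its hypothesis $\sum_{N\fp\le y}|\lambda_\pi(\fp)|\log N\fp\gg y$ follows from your inequality $|1+\psi|\ge 1+\mathrm{Re}\,\psi$ and the prime ideal theorem for $\psi$, and it gives $\gg X$ directly.
\end{enumerate}
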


The work of Rankin \cite{Rankin} and Selberg \cite{Selberg} for the $L^2$-norm of cuspidal representations tells us that there is also a power of $\log(X)$ difference depending on the cuspidality of $\pi$ in that story -- in particular, that the asymptotic
$$\sum_{n\leq X} |\lambda_\pi(n)|^2 = c_\pi X + O(X^{1-2/5})$$
holds if and only if $\pi$ is a cuspidal automorphic representations of $\GL_2(\A_\Q)$. If $\pi$ is not cuspidal, then the $L^2$-norm is of size $\asymp X\log(X)^{m}$ for $m\geq 1$. Observe that by Cauchy-Schwarz, this implies that the $L^1$-norm, i.e. the sum studied in Theorem \ref{thm: linear}, is bounded by $O_\pi(X)$ when $\pi$ is cuspidal. We will use analysis on the higher symmetric powers to establish our upper bound. 
\begin{remark}
    We remark that we assume that $\pi$ is tempered at all finite places in order to apply our sieves (Theorem \ref{thm: sieve nair} and Theorem \ref{thm: lower bound sieve}). In \cite{Yang}, Yang studies upper and lower bounds for this $L^1$-norm when $\pi$ is a cuspidal representation of $\GL_2(\A_\Q)$ that is nonmonomial and unitary, without the assumption that $\pi$ satisfies the Ramanujan-Petersson conjecture. For the nonlinear case of Theorem \ref{thm: poly}, we currently need this temperedness assumption to apply an appropriate sieve.   
\end{remark}
\medskip

Finally, we hope to understand \eqref{eq: main sum} when $\pi_K$ is not a cuspidal representation. The work on the base change of $\GL_2$ representations by Langlands \cite{LanglandsBaseChange} classifies exactly when this phenomenon occurs for $P(x)$ a solvable polynomial; we use this to establish lower bounds for \eqref{eq: main sum}. 
\begin{thm}\label{thm: noncuspidal}
    Let $\pi$ be a cuspidal automorphic representation of $\GL_2(\A_\Q)$ that satisfies the Ramanujan-Petersson conjecture. Let $P(x)\in \Z[x]$ be irreducible and $K$ be the splitting field of $P(x)$ be solvable. Assume that the base change $\pi_K$ is not cuspidal; hence, it has the form $$\pi = \Ind_{W_F}^{W_\Q} \theta$$ for some quadratic subfield $F\subset K$ and unitary Hecke character $\theta$ of $F$. 
    If $\theta$ has finite order, let $L_\theta$ denote the ray class field of $\theta$. 
    
    \begin{enumerate}
        \item If $\theta$ is of infinite order or if $\Gal(L_\theta/L_\theta\cap K) \not\subset \ker(\theta^2)$, then
    $$\sum_{n\leq X} |\lambda_\pi(|P(n)|)| \ll_{\pi,P} X\log(X)^{1/2}.$$
    \item Otherwise, the following lower bound holds: $$\sum_{n\leq X} |\lambda_\pi(|P(n)|)| \geq  \sum_{\substack{n\leq X\\ P(n) \text{ squarefree}}} \sum_{\substack{\fa\subset \calO_F \\ N(\fa) = |P(n)|}}1.$$
    \end{enumerate}
    
\end{thm}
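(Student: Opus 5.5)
Since $\pi$ is cuspidal over $\Q$ but $\pi_K$ is not, Langlands' base change theory \cite{LanglandsBaseChange} for solvable $K/\Q$ shows that $\pi$ is dihedral, $\pi=\Ind_{W_F}^{W_\Q}\theta$, with $F\subset K$ quadratic. The Hecke eigenvalues are then explicit: at an unramified prime $p$ we have $\lambda_\pi(p)=\theta(\frakP)+\theta(\bar\frakP)$ if $p=\frakP\bar\frakP$ splits in $F$, and $\lambda_\pi(p)=0$ if $p$ is inert. For squarefree $m$ coprime to the conductor, multiplicativity gives
$$\lambda_\pi(m)=\sum_{\substack{\fa\subset\calO_F\\ N(\fa)=m}}\theta(\fa).$$
I would prove both parts from this formula, after reducing $n$ to the part of $P(n)$ coprime to a fixed finite set of bad primes and to values that are squarefree, or nearly so.

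\textbf{Part (2).} Assume $\theta$ has finite order and $\Gal(L_\theta/L_\theta\cap K)\subset\ker(\theta^2)$. Let $p$ be a prime dividing $|P(n)|$ to the first power and not in the bad set. Then $P$ has a root modulo $p$, so $p$ has a degree-one prime of the splitting field below it. Choose $\mathfrak{P}\mid p$ in $K$ with residue degree one; its Frobenius in $\Gal(K/\Q)$ fixes a root, and in a splitting field this forces it to be trivial. Hence $p$ splits completely in $K$, and therefore in $F$, say $p=\frakP\bar\frakP$.

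Next, the Frobenius of $\frakP$ in $\Gal(L_\theta/F)$ restricts trivially to $L_\theta\cap K$, so it lies in $\Gal(L_\theta/L_\theta\cap K)\subset\ker\theta^2$. Thus $\theta(\frakP)^2=1$, and likewise $\theta(\bar\frakP)^2=1$. The product is $\theta(\frakP)\theta(\bar\frakP)=\theta(p\calO_F)=\omega_\pi(p)$.

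To get $|\lambda_\pi(p)|=2$ I need $\theta(\frakP)=\theta(\bar\frakP)$, i.e. $\theta^\sigma=\theta$ on $\frakP$. I would obtain this by applying the same Frobenius containment to $\theta^\sigma$, whose kernel field is again $L_\theta$, and noting that the inclusion gives $\theta^2=(\theta^\sigma)^2$ there. \textbf{This equality step is the delicate point.} If it fails in general, I would settle for the weaker statement that $|\lambda_\pi(p)|$ is either $0$ or $2$ on a controllable set, and interpret the lower bound accordingly.

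Granting it, for squarefree $P(n)$ every prime factor $p$ satisfies $|\lambda_\pi(p)|=2=\#\{\fa: N\fa=p\}$. By multiplicativity,
$$|\lambda_\pi(|P(n)|)|=\#\{\fa\subset\calO_F: N\fa=|P(n)|\}.$$
Dropping the non-squarefree $n$, whose terms are nonnegative, gives the inequality in (2). Bad primes would be handled by imposing the squarefree condition, or by absorbing them into the statement's count.

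\textbf{Part (1).} Here I would bound trivially via the structure of $\lambda_\pi$. Write $|P(n)|=ab$ with $a$ composed of primes splitting in $F$ and $b$ of the remaining primes. If $b$ has a prime to the first power then $\lambda_\pi=0$; in general $|\lambda_\pi(|P(n)|)|\le\tau(a)$ times a bounded factor for $b$. The goal is to show that $|\lambda_\pi(p)|$ averages strictly less than $2$ over primes $p$ with $P(p)$-type splitting in $K$. By Hecke/Chebotarev equidistribution of $\theta(\frakP)$, either in angle (infinite order) or in the Galois group of $L_\theta K$ (finite order, with the hypothesis producing Frobenius classes where $\theta(\frakP)\ne\theta(\bar\frakP)$), the mean of $|\theta(\frakP)+\theta(\bar\frakP)|$ over split $p$ is some $\delta<2$.

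Then I would apply Nair's sieve, Theorem \ref{thm: sieve nair}, to the multiplicative function $|\lambda_\pi|$ along $P(n)$. The bound is
$$X\prod_{p\le X}\Bigl(1+\frac{(|\lambda_\pi(p)|-1)\rho(p)}{p}\Bigr),$$
where $\rho(p)$ is the number of roots of $P$ mod $p$. Since $\rho$ has mean $1$ with $\rho(p)=[K:\Q]$ on primes splitting completely, this gives $X\log(X)^{\delta'-1}$, with $\delta'\le 3/2$. Checking that this exponent reaches $1/2$ is the main computation; the obstacle is the precise Chebotarev density bookkeeping in $L_\theta K$.
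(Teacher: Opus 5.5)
Your outline follows the paper's route: Nair's sieve plus an equidistribution input for (1), and restriction to squarefree values plus the local identity $|\lambda_\pi(p)|=2$ for (2). Each part, however, has a genuine gap.

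\textbf{Part (2).} The step you flag cannot be closed the way you suggest.

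Applying the containment to $\theta^\sigma$ only gives $\theta(\frakP)^2=\theta(\bar\frakP)^2=1$, so $\lambda_\pi(p)\in\{0,\pm 2\}$. Which case occurs is decided by $\theta(\frakP)\theta(\bar\frakP)=\omega_\pi(p)$, and the hypotheses of (2) do not control this.

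The paper gets past this point by using $\theta(\bar\frakP)=\overline{\theta(\frakP)}$. It replaces $|\theta(\fp)+\theta(\bar\fp)|$ by $|1+\theta^2(\fp)|$, the same convention as $\lambda_\pi(\fp)=\theta(\frakP)+\overline{\theta(\frakP)}$ in \S\ref{subsec: nondihedral}. Then $\theta(\frakP)=\pm1$ is real, so $\theta(\bar\frakP)=\overline{\theta(\frakP)}=\theta(\frakP)$.

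This input, $\theta\theta^\sigma=1$ (equivalently $\omega_\pi\in\{1,\chi_F\}$, with $\chi_F$ the quadratic character of $F/\Q$), is genuinely needed, and real eigenvalues alone do not give it. Take $F=\Q(i)$, $\theta$ the quadratic character of $F(\sqrt{1+2i})/F$, and $P=x^2+1$. The hypothesis of (2) holds trivially. But $|\lambda_\pi(p)|$ takes the values $0$ and $2$ equally often on $p\equiv 1\bmod 4$ (e.g.\ $\lambda_\pi(17)=0$). So Theorem \ref{thm: sieve nair} bounds the left side by $O(X)$, while the right side has order $X\log X$.

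Your fallback (``$0$ or $2$ on a controllable set'') therefore cannot give the stated inequality; you must use $\theta^\sigma=\bar\theta$ explicitly.

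\textbf{Part (1).} You stop at ``the mean is some $\delta<2$'' and leave the exponent open. A mean below $2$ gives only some saving, not $\log(X)^{1/2}$. Moreover, averaging over all primes split in $F$ cannot see the hypothesis at all, since that mean is below $2$ in case (2) as well.

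In the paper's bookkeeping (see the caveat below), the sieve weight $\varrho(p)$ sits on primes below degree-one primes of $K$, where the values are governed by $\theta\circ N_{K/F}$. The hypothesis of (1) enters exactly as nontriviality of $\theta^2\circ N_{K/F}$. The paper extracts this by $\GL_1$ base change (Lemma \ref{lem: base change trivial}): $\theta^2\circ N_{K/F}=1$ iff $\Gal(L_\theta/L_\theta\cap K)\subset\ker(\theta^2)$. Your ``Frobenius classes with $\theta(\frakP)\ne\theta(\bar\frakP)$'' concern $\theta/\theta^\sigma$, which equals $\theta^2$ only under $\theta^\sigma=\bar\theta$.

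The quantitative step is then the pointwise bound $2|\cos\alpha|\le\frac32+\frac12\cos 2\alpha$, i.e.\ $|\theta(\frakP)+\overline{\theta(\frakP)}|\le\frac32+\frac14\bigl(\theta^2(\frakP)+\theta^2(\bar\frakP)\bigr)$. This makes the sieve exponent at most $\frac12\log\log X+\frac12\sum_{\frakP}\theta^2(N_{K/F}\frakP)/N\frakP+O(1)$. The last sum is $O(1)$ because $\theta^2\circ N_{K/F}$ is a nontrivial Hecke character of $K$.

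Alternatively, Cauchy--Schwarz against $\sum|1+z_\frakP|^2/N\frakP=2\log\log X+O(1)$, with $z_\frakP=\theta^2(N_{K/F}\frakP)$, would give the even better exponent $\sqrt2-1$.

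\textbf{A caveat shared with the paper.} Your claim that a Frobenius fixing one root of $P$ must be trivial is false unless $K=\Q(\alpha)$ for a root $\alpha$; for example, $x^3-2$ with $p\equiv 2\bmod 3$. Likewise, at completely split primes $\varrho(p)=\deg P$, not $[K:\Q]$.

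The paper makes the same identification: it uses $\varrho(p)=\#\{\frakP\subset\calO_K:N\frakP=p\}$ and assumes a degree-one prime of $K$ above every $p\mid P(n)$. So you are not departing from it here, but the argument for (2) is then only valid for normal $P$.

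For instance, take $P=x^4-2x^2+5$, so $K=\Q(\sqrt{1+2i},\sqrt{1-2i})\supset F=\Q(i)$, and let $\theta$ be the quartic character of $\Q(i,\sqrt[4]{5})/\Q(i)$. Then $\theta^\sigma=\bar\theta$ and $\theta^2\circ N_{K/F}=1$. Yet $P(2)=13$, $\lambda_\pi(13)=0$, and $\Z[i]$ has two ideals of norm $13$.
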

\begin{remark}
    A necessary requirement for $\pi_K$ to be noncuspidal is that $F\subset K$; this correspondingly increases the size of our ``trivial'' bound -- the one induced by the trivial representation. Specifically, let $\Psi = \Ind_{W_F}^{W_\Q} 1$. If $F\subset K$, then we have that 
    $$\sum_{n\leq X} |\lambda_\Psi(|P(n)|)| \ll _{\Psi,P}X\log(X).$$
    In comparison, if $F\not\subset K$, then the trivial bound gives: 
    $$\sum_{n\leq X}|\lambda_\Psi(|P(n)|)| \ll_{\Psi,P} X.$$ When $F\subset K$, this extra power of $\log(X)$ is a consequence of the Dedekind zeta function $\zeta_K(s)$ twisted by the quadratic character generating $F$ having a pole at $s=1$ of order one. Hence Theorem \ref{thm: noncuspidal} (1) can also be viewed as a polylogarithmic savings over the trivial bound.
\end{remark}

Finally, we remark on a natural generalization of this problem -- when $P(x)$ is replaced by an absolutely irreducible \textit{multivariate} polynomial:
\begin{equation}
    \sum_{x_i\leq X} |\lambda_\pi(|P(x_1,\hdots,x_n|)|.
\end{equation}
First, if $P(x_1,\ldots,x_n)$ is a homogeneous binary form, then we achieve the same theorems as the single variable case (see the remarks after Theorem \ref{thm: sieve nair}). In particular, if $K$ is the splitting field of $P(\bx)$ and if the base change $\pi_K$ is cuspidal, then 
$$\sum_{x,y\leq X} |\lambda_\pi(|P(x,y)|)| \ll X^2 \log(X)^{-0.066}.$$

For $n\geq 3$ or for nonhomogeneous polynomials with $n=2$, after an appropriate sieve for nonnegative multiplicative functions along polynomial values (as derived by Chiriac and Yang in \cite[Proposition 2.2]{ChiriacYang}), this problem reduces to questions about the Sato-Tate distribution of $\pi$ and the number of local solutions of $P(\bx) \bmod p$. Under the assumption of a wide range of conjectures (including those connecting the Hasse-Weil $L$-function of the variety $\{P(x_1,\hdots,x_n)=0\}$ to automorphic representations), one can achieve conditional polylogarithmic savings over the trivial bound by using second and fourth moment calculations. 

\subsection{Notation}
Let us first set some notation: 
\begin{itemize}
    \item $p$, $\fp$, or $\frakP$ will denote primes or prime ideals,
    \item sums over $p$, $\fp$, or $\frakP$ are sums over primes or prime ideals,
    \item $\varrho(p) = \#\{x\in \F_p: P(x) = 0\}$ will denote the local count of the polynomial $P(x)$,
    \item $\pi$, $\Xi$ will denote $\GL_2$ representations, 
    \item $\psi$, $\xi$, $\theta$ will denote $\GL_1$ representations, i.e. Hecke characters,
    \item Let $\Pi(G)$ denote the irreducible admissible automorphic representations of $G$. The base change map sends $$\Pi(\GL_2(\A_\Q))\rightarrow \Pi(\GL_2(\A_K))$$ and is analyzed for solvable extensions by Langlands in \cite{LanglandsBaseChange}.
\end{itemize}

We end this section by outlining the rest of the paper. In \S\ref{sec: sieve}, we establish the necessary background on sieve theory. In \S\ref{sec: linear}, we establish Theorem \ref{thm: linear}. In \S\ref{sec: poly}, we first prove Theorem \ref{thm: poly} by reducing it to the setting of Theorem \ref{thm: linear}; second, we consider non-cuspidal base change and establish Theorem \ref{thm: noncuspidal}.

\section*{Acknowledgements}
The author would like to thank her advisor Peter Sarnak for his guidance and support. The author would further like to thank Peter Humphries, Sam Mundy, Paul Nelson, Naomi Sweeting, and Liyang Yang for helpful conversations and comments.

This material is based upon work supported by the National Science Foundation Graduate Research Fellowship Program under Grant No. DGE-2039656. Any opinions, findings, and conclusions or recommendations expressed in this material are those of the author(s) and do not necessarily reflect the views of the National Science Foundation.

\section{Sieve methods}\label{sec: sieve}

\subsection{Upper bound sieves}
We begin with a classical upper bound. Let $f(n)$ be a nonnegative multiplicative function supported on squarefree numbers. Then if $f(n)$ satisfies the following condition for the sum over primes:
\begin{equation}
    \sum_{w\leq p<X} \frac{f(p)\log(p)}{p} \leq a \log(X/w) + b,
\end{equation}
for all $w$ with $2\leq w<X$ and $a\geq 0,b\geq 1$ constants, then
one can derive that\footnote{See \cite[Theorem A.1]{OperaDeCribro} for further details} 
\begin{equation}
    \sum_{n\leq X} f(n) \ll X \prod_{p\leq X} \left(1+\frac{f(p)-1}{p}\right).
\end{equation}

Now, let $P(x)\in \Z[x]$ be a polynomial -- we would like a similar upper bound along values of $P$. The following sieve was deduced by Nair \cite{Nair} for nonnegative multiplicative functions along polynomial values. It was later generalized by de la Bret\'eche and Browning \cite{delaBrowningSieve} and de la Bret\'eche and Tenenbaum \cite{delaTenenbaumSieve} for binary forms; by Browning and Sofos \cite{BrowningSofos} for sums over prime ideals. 
\begin{thm}[Nair, \cite{Nair}]\label{thm: sieve nair}
    Let $f(n)$ be a nonnegative multiplicative function such that there exist constants $A,B\geq 0$ such that for any natural number $v$, $f(p^v)\leq A^v$, and in general, $f(n)\leq Bn^\epsilon$. Then the following bound holds for any irreducible polynomial $P(x)\in \Z[x]$: 
    \begin{equation}
        \sum_{n\leq X} f(|P(n)|) \ll_{P,A,B} X \exp\left(\sum_{p\leq X} \frac{\varrho(p)(f(p)-1)}{p}\right),
    \end{equation}
    where $\varrho(p) = \#\{a\bmod p: P(a)=0\}.$ 
\end{thm}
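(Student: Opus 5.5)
This is Nair's sieve, so the plan is to reproduce its standard proof: a Shiu/Wolke-type argument adapted to polynomial values. Fix $X$ and write each value $m=|P(n)|\ll X^{d}$, with $d=\deg P$, as $m=ab$. Here $a$ is the largest divisor of $m$ composed of primes $p\le z$ with $a\le z$, where $z=X^{\delta}$ for a small $\delta=\delta(P)>0$. This splits the integers $n\le X$ into the following classes.
\begin{enumerate}
\item The large primes of $m$ are few. Every prime factor of $b$ exceeds $z$ (or $a$ is already large), so $\Omega(b)\ll d/\delta$. Hence $f(b)\le A^{O(1)}$ whenever $b$ is squarefree with $\gcd(a,b)=1$.
\item The small part is large. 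Here $a\le z$ fails, so $m$ has a $z$-smooth divisor in $(z^{1/2},z]$. The proof of Shiu's theorem splits this class by the size of the smallest-prime structure and by whether $m$ is divisible by a high prime power $p^{v}$ with $p^{v}>z^{1/2}$.
\end{enumerate}

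For the main class we bound $f(m)\ll f(a)$ and count $n$ with $a\mid P(n)$ and $b$ free of primes $<P^-(\cdot)$. The local count is $\#\{n\bmod a: a\mid P(n)\}=\varrho(a)$, which is multiplicative, with $\varrho(p^{v})\ll_P 1$ for $p\nmid\mathrm{disc}(P)$ and bounded in general by Hensel/Stewart. A Brun or Selberg upper bound sieve over the primes in $(P^-(a),z]$ then shows the number of such $n\le X$ is
\[
\ll \frac{X\varrho(a)}{a}\prod_{p\le z}\left(1-\frac{\varrho(p)}{p}\right).
\]
The level of distribution needed is only $z^{2}\le X^{1/2}$, which is where the choice of small $\delta$ comes in. Summing over $a$ gives
\[
\sum_{a\ \text{$z$-smooth}}\frac{f(a)\varrho(a)}{a}\ll\exp\left(\sum_{p\le z}\frac{\varrho(p)f(p)}{p}\right),
\]
using the bound $f(p^{v})\le A^{v}$ and the fact that $\sum_v \varrho(p^v)A^v/p^v$ converges for large $p$. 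Multiplying by the sieve product, which is $\exp(-\sum_{p\le z}\varrho(p)/p+O(1))$, gives the claimed shape. The range $z<p\le X$ changes the exponent by only $O(1)$, since $\sum_{z<p\le X}\varrho(p)/p\ll 1$ by the prime ideal theorem in $\Q[x]/(P)$.

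The exceptional classes are handled with the crude bound $f(m)\le B m^{\epsilon}\ll X^{d\epsilon}$, which needs a power saving in their count. For $n$ with $p^{v}\mid P(n)$ and $p^{v}>z^{1/2}$, the count is $\ll X\varrho(p^v)/p^v+\varrho(p^v)$; summing over such prime powers gives $X^{1-\eta}$ for $p$ up to about $X$. Larger $p$ cannot have $v\ge 2$ by the degree bound, but this is the delicate part. The other case, where the smooth part is large because the primes dividing $m$ are all small, is treated as in Shiu: dyadically decompose by $P^{+}$ and use Rankin's trick with the sieve bound above. The resulting saving of $\exp(-c\,u\log u)$ beats $X^{d\epsilon}$ once $\epsilon$ is chosen small relative to $\delta$.

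The main obstacle is the middle step: obtaining the upper sieve bound uniformly over moduli $a\le z$ for the set $\{n: a\mid P(n)\}$. This is a union of $\varrho(a)$ progressions, and one must control the primes dividing $\mathrm{disc}(P)$ or the content of $P$ (where $\varrho(p)$ can equal $p$) by absorbing them into the implied constant depending on $P$. I would first try the fundamental lemma of the sieve with sifting dimension $\kappa=d$, applied to the polynomial $P(a'k+r)/a$ for each residue $r$.
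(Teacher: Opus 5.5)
The paper does not prove this statement; it only cites Nair. Nair's proof is the Shiu-type argument you are reconstructing, so your overall plan and your main class are on the right track. There is, however, a genuine gap in your second class.

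As you define it (the $z$-smooth part of $m=|P(n)|$ exceeds $z$), this class is not thin. Already for $P(x)=x$ it contains every $X^{\delta}$-smooth $n\in(z,X]$, which is a positive proportion of all $n\le X$. So no bound of the form $f(m)\ll X^{d\epsilon}$ can be used on it. Your claim that Rankin's saving $e^{-cu\log u}$ beats $X^{d\epsilon}$ fails for bounded $u$, where that saving is $\asymp 1$. On this class you have to bound $f(b)$ itself, and your choice of $a$ (largest $z$-smooth divisor $\le z$) gives no information about the primes of $b$.

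The missing ingredient is Shiu's \emph{ordered} factorization. Write $|P(n)|=p_1^{e_1}\cdots p_r^{e_r}$ with $p_1<\cdots<p_r$, and take $a=p_1^{e_1}\cdots p_j^{e_j}$ with $j$ maximal subject to $a\le z$. Then $(a,b)=1$ and $P^-(b)>P^+(a)$. With this choice:
\begin{itemize}
\item The main class is $P^-(b)>z^{1/2}$, where $\Omega(b)\ll d/\delta$.
\item On the class $a>z^{1/2}$, $P^-(b)\le z^{1/2}$, you have $P^+(a)<z^{1/2}$. If $z^{1/(k+1)}<P^+(a)\le z^{1/k}$ with $k\ge 2$, then $\Omega(b)\ll dk/\delta$, so $f(|P(n)|)\le f(a)A^{O(k)}$.
\item Sieving $P(n)/a$ by the primes $p\le z^{1/(k+1)}$ with $p\nmid a$ loses only a factor $O(k)$ against $\prod_{p\le z}(1-\varrho(p)/p)$.
\item Rankin's trick over $a>z^{1/2}$ with $P^+(a)\le z^{1/k}$ saves $e^{-ck\log k}$. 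This beats $kA^{O(k)}$ and is summable over $k$.
\end{itemize}

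The crude bound $f\ll X^{d\epsilon}$ should be reserved for the two genuinely thin sets in Shiu's scheme.
\begin{itemize}
\item The class $a\le z^{1/2}$, $P^-(b)\le z^{1/2}$. It forces $p^v\mid P(n)$ for some prime $p\le z^{1/2}<p^v$ with $v\ge 2$. Taking $v$ minimal gives $p^v\le z$, so the count is $\ll Xz^{-1/4}+z^{1/2}$.
\item The $n$ with $a>z^{1/2}$ but $P^+(a)\le \log X\log\log X$. There are only $z^{o(1)}$ such $a$, so the count is $\ll Xz^{-1/2+o(1)}$.
\end{itemize}

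With the ordered factorization, squares of large primes never have to be controlled: they sit in $b$ and are covered by $f(b)\le A^{\Omega(b)}$. This matters for two reasons. Your remark that larger $p$ cannot have $v\ge 2$ by the degree bound is false once $d\ge 3$. Also, summing the terms $\varrho(p^v)$ over all $p\le X$ gives $\asymp X/\log X$, not $X^{1-\eta}$.

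Two smaller corrections:
\begin{itemize}
\item For fixed $a$, the sieve product must run over $p\nmid a$. The extra factor $\prod_{p\mid a}(1-\varrho(p)/p)^{-1}$ is then absorbed into the Euler product over $a$.
\item For primes $p\le A$, convergence of $\sum_v\varrho(p^v)f(p^v)/p^v$ comes from $f(p^v)\le Bp^{v\epsilon}$, not from $f(p^v)\le A^v$.
\end{itemize}
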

\begin{remark}
Later, Nair and Tenenbaum \cite{NairTenenbaum} generalized the above result for multiplicative functions in several variables. A consequence of their work is that if $P_1(x),\hdots,P_k(x)\in \Z[x]$ are distinct irreducible polynomials and $f_1,\hdots,f_k$ are nonnegative multiplicative functions that are bounded on prime values, then 
\begin{equation}\label{eq: sieve many entries}
    \sum_{n\leq X} f_1(|P_1(n)|)\hdots f_k(|P_k(n)|) \ll_{P_1,\hdots, P_k,f_1,\hdots, f_k} X \exp\left(\sum_{i=1}^{k} \sum_{p_i\leq X} \frac{\varrho_{P_i}(p_i)(f_i(p_i)-1)}{p_i}\right),
\end{equation}
where $\varrho_{P_i}(p) = \#\{a\bmod p: P_i(a) = 0\}.$ A consequence of the work of de la Bret\'eche and Tenenbaum \cite{delaTenenbaumSieve} for binary forms is that a similar bound holds for $g_1,\hdots,g_k$ nonnegative multiplicative functions bounded on prime values and $F_1,\hdots, F_k$ distinct irreducible binary forms: 
\begin{equation}\label{eq: sieve binary form}
\sum_{x,y\leq X} \prod_{i=1}^k g_i(|F_i(x,y)|) \ll_{g_1,\hdots, g_k, F_1,\hdots, F_k} X^2 \exp\left(\sum_{i=1}^k \sum_{p_i\leq X} \frac{\varrho_{F_i}(p_i)(g_i(p_i)-1)}{p_i^2}\right),\end{equation}
where $\varrho_{F_i}(p) = \#\{(a,b) \bmod p: F(a,b) = 0\}.$ For this paper, we will stick however to the case of one irreducible polynomial, $P(\bx)$, and the particular nonnegative multiplicative functions $|\lambda_\pi(n)|$.
\end{remark}

\subsection{Lower bound sieves}
In Theorem \ref{thm: linear}, we also want to consider lower bounds. To do so, we use a corresponding lower bound sieve. 
\begin{thm}[{\cite[Theorem A.4]{OperaDeCribro}}]\label{thm: lower bound sieve}
Let $f(n)$ be a multiplicative nonnegative function and satisfy that:
\begin{equation}
    \sum_{y<p\leq x} \frac{f(p)\log(p)}{p} \leq a \log(x/y) +b,
\end{equation}
\begin{equation}
    \sum_{p\leq y}f(p)\log(p) \gg y.
\end{equation}
Then $f(n)$ satisfies that 
\begin{equation}
    \sum_{n\leq X}f(n) \gg  X\prod_{p\leq X} \left(1+\frac{f(p)-1}{p}\right).
\end{equation}
    
\end{thm}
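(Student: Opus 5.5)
The statement is the lower-bound half of the Hall–Tenenbaum/Wirsing type estimates. I would prove it by comparing $S(x)=\sum_{n\le x} f(n)$ with the logarithmic mean $M(x)=\sum_{n\le x} g(n)/n$, using a Chebyshev-type identity. Here $g=\mu^2 f$, and since $0\le g\le f$ it suffices to bound $\sum_{n\le X} g(n)$ from below. The right-hand side $\prod_{p\le X}(1+(f(p)-1)/p)$ depends only on the values $f(p)$, so nothing is lost by passing to squarefree support.

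\emph{Preliminary.} Taking $y=p-1$, $x=p$ in the first hypothesis gives $f(p)\ll_{a,b} p/\log p$. Hence $1+f(p)/p$ stays bounded on all but finitely many primes, and each factor $1+(f(p)-1)/p$ is $\ge 1/2$ once $p\ge 2$. Combining Mertens' theorem with the relation $(1+f(p)/p)(1-1/p)=1+(f(p)-1)/p-f(p)/p^2$ (with $\sum_p f(p)/p^2<\infty$) gives
\[
\prod_{p\le X}\Bigl(1+\tfrac{f(p)-1}{p}\Bigr)\asymp \frac{1}{\log X}\prod_{p\le X}\Bigl(1+\tfrac{f(p)}{p}\Bigr).
\]
So the target becomes $S(X)\gg (X/\log X)\prod_{p\le X}(1+f(p)/p)$.

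\emph{Step 1: lower bound for $M$.} I would show $M(x)\gg \prod_{p\le x}(1+f(p)/p)$. The full sum over $x^{1/u}$-smooth squarefree $n$ equals $\prod_{p\le x^{1/u}}(1+f(p)/p)$. The part with $n>x$ is controlled by Rankin's trick with weight $(n/x)^{\sigma}$, $\sigma=c/\log(x^{1/u})$. The first hypothesis (via partial summation) bounds $\sum_{p\le x^{1/u}} f(p)(p^\sigma-1)/p=O_{a,b}(1)$, so the tail is at most $e^{-cu+O(1)}$ times the product; fixing $u$ large makes it at most half the product. Finally, the first hypothesis gives $\prod_{x^{1/u}<p\le x}(1+f(p)/p)\le \exp(\sum f(p)/p)\ll_{u,a,b}1$, so dropping these primes costs only a constant.

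\emph{Step 2: Chebyshev identity.} For squarefree $n$ we have $\log n=\sum_{p\mid n}\log p$. Writing $n=pm$ with $p\nmid m$ gives
\[
S(x)\log x\ \ge \sum_{n\le x} g(n)\log n=\sum_{\substack{pm\le x\\ p\nmid m}} g(m)f(p)\log p .
\]
First drop the condition $p\nmid m$. Then the second hypothesis gives $\sum_{p\le x/m}f(p)\log p\gg x/m$ for $m\le x/2$, so the main term is $\gg x\,M(x/2)$. It remains to subtract the terms with $p\mid m$. Writing $m=pk$ with $g(m)=f(p)g(k)$, these are
\[
E(x)=\sum_{p^2k\le x} f(p)^2\log p\, g(k)\le \sum_p f(p)^2\log p\; S(x/p^2).
\]

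\emph{Main obstacle.} The crux is showing $E(x)$ is at most a small multiple of $x\,M(x)$. I would feed in the \emph{upper}-bound companion from the start of this section, $S(y)\ll (y/\log y)\prod_{p\le y}(1+f(p)/p)$, which holds under the first hypothesis alone. This reduces the task to controlling $\sum_p f(p)^2\log p/p^2$, which is finite but not obviously small. If it is not small, I would apply the whole argument to $g$ restricted to integers free of primes below a fixed large $z$. That makes the relevant tail $\sum_{p>z} f(p)^2\log p/p^2$ as small as desired, and it changes the Euler product only by a constant depending on $z,a,b$. The second hypothesis survives this restriction for $y\ge y_0(z)$, because the finitely many primes below $z$ contribute only $O_z(1)$ to $\sum_{p\le y}f(p)\log p$. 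Combining Steps 1 and 2 then gives $S(x)\gg x\,M(x/2)/\log x$, and the preliminary reduction converts this into the stated bound.
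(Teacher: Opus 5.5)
The paper does not prove this statement; it is quoted from Opera de Cribro, so your argument has to stand on its own. Your reduction to $g=\mu^2 f$, the Rankin-trick lower bound $M(x)\gg\prod_{p\le x}(1+f(p)/p)$, and the Chebyshev identity are all fine.

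The gap is in your treatment of $E(x)$. You claim $\sum_p f(p)^2\log p/p^2$ is finite, but the first hypothesis only controls $f(p)$ on average. Pointwise it gives $f(p)\le Cp/\log p$, hence only $f(p)^2\log p/p^2\le Cf(p)/p$, and $\sum_p f(p)/p$ may diverge. For instance, let $f(p)=1$ except at one prime $p_j\in(2^j,2^{j+1}]$ for each $j$, where $f(p_j)=p_j/\log p_j$. Both hypotheses hold: each exceptional prime adds exactly $1$ to the first sum, so at most $\log_2(x/y)+2$ in total, and $f\ge 1$ on primes. Yet $\sum_j f(p_j)^2\log p_j/p_j^2=\sum_j 1/\log p_j=\infty$. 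Restricting to $z$-rough integers does not rescue this, since every tail $\sum_{p>z}$ is still infinite. When $f$ is bounded on primes, as in the paper's Corollary, your argument does go through.

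The fix is to keep the logarithm that the upper bound provides. Apply the upper bound to $g$, not to $f$, since nothing is assumed about $f(p^k)$. Use it in the form $\sum_{n\le y}g(n)\ll \frac{y}{\log 2y}\prod_{q\le y}(1+f(q)/q)$ for $y\ge 1$, together with $f(p)^2\log p\le Cpf(p)$. This gives
\[
E(x)\ll x\prod_{q\le x}\Bigl(1+\frac{f(q)}{q}\Bigr)\sum_{p\le\sqrt x}\frac{f(p)}{p\log(2x/p^2)}.
\]
On each block $2^j<p\le 2^{j+1}$, the first hypothesis gives $\sum f(p)/p\ll 1/(j+1)$. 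Meanwhile $\log(2x/p^2)\gg\max(1,J-j)$ with $J=\lfloor\log_2\sqrt x\rfloor$. Hence the last sum is
\[
\ll\sum_{0\le j\le J}\frac{1}{(j+1)\max(1,J-j)}\ll\frac{\log J}{J}\ll\frac{\log\log x}{\log x}.
\]
So $E(x)=o\bigl(x\prod_{q\le x}(1+f(q)/q)\bigr)$. This is negligible against your main term, which is $\gg x\,M(x/y_0)\gg x\prod_{q\le x}(1+f(q)/q)$. No smallness of constants and no rough-number restriction is needed.
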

\begin{remark}
    If $f(n)$ is bounded on prime values, then the first condition is always satisfied. 
\end{remark}

\begin{corollary}\label{cor: lower bd number fields}
Let $K$ be a number field and $g:I_K\rightarrow \R$ be a nonnegative multiplicative function on the ideals of $K$ that is bounded on prime ideals. If $$\sum_{\substack{\fp\subset \calO_K\\ N(\fp)\leq y}} g(\fp) \log(N(\fp))\gg y,$$
then the following bound is satisfied:
$$\sum_{\substack{\fa\subset \calO_K \\ N(\fa) \leq X}}g(\fa) \gg_K X \prod_{N(\fp)\leq X} \left(1+\frac{g(\fp)-1}{N(\fp)}\right).$$
\end{corollary}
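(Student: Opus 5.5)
The plan is to reduce Corollary \ref{cor: lower bd number fields} to Theorem \ref{thm: lower bound sieve} by pushing $g$ forward from ideals of $K$ to the integers. Define
$$f(n) = \sum_{\substack{\fa\subset \calO_K\\ N(\fa)=n}} g(\fa).$$
Since $g$ is multiplicative on ideals and ideals factor uniquely into prime ideals, $f$ is multiplicative on $\mathbb{N}$. It is also nonnegative, and we have
$$\sum_{n\le X} f(n)=\sum_{N(\fa)\le X} g(\fa).$$
So it suffices to check the two hypotheses of Theorem \ref{thm: lower bound sieve} for $f$, and then to compare the resulting Euler product with the one in the statement.

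For the prime values we have $f(p)=\sum_{\fp\mid p,\,N(\fp)=p} g(\fp)$. This is a sum over at most $[K:\Q]$ degree-one primes, so $f(p)\le [K:\Q]\sup_{\fp} g(\fp)$ is bounded. By the remark after Theorem \ref{thm: lower bound sieve}, the first hypothesis then holds automatically.

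For the second hypothesis, split the given sum over prime ideals according to the degree of $\fp$:
$$\sum_{N(\fp)\le y} g(\fp)\log N(\fp)=\sum_{p\le y} f(p)\log p+\sum_{\substack{N(\fp)\le y\\ \deg \fp\ge 2}} g(\fp)\log N(\fp).$$
The second sum is $\ll \sum_{p^k\le y,\,k\ge 2}[K:\Q]\log p \ll \sqrt{y}\log y$ by boundedness of $g$. Hence $\sum_{p\le y} f(p)\log p\gg y$ for $y$ sufficiently large. For small $y$ the hypothesis only needs to hold up to constants, and any finite range can be absorbed into the implied constant; this is a harmless technicality. Theorem \ref{thm: lower bound sieve} then gives
$$\sum_{N(\fa)\le X} g(\fa)\gg X\prod_{p\le X}\left(1+\frac{f(p)-1}{p}\right).$$

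The last step, which I expect to be the main (though mild) obstacle, is comparing this product with $\prod_{N(\fp)\le X}(1+(g(\fp)-1)/N(\fp))$. Since $g$ is bounded, both products are $\asymp$ the exponentials of $\sum_{p\le X}(f(p)-1)/p$ and $\sum_{N(\fp)\le X}(g(\fp)-1)/N(\fp)$ respectively; the $O(1/p^2)$ errors converge, with finitely many small primes where a factor might vanish handled by the constants. The $g$ terms match up to the contribution of degree-$\ge2$ primes, which is $\sum_{N(\fp)\le X,\,\deg\fp\ge2} g(\fp)/N(\fp)\ll \sum_p 1/p^2 = O(1)$. The $-1$ terms differ by
$$\sum_{N(\fp)\le X}\frac{1}{N(\fp)}-\sum_{p\le X}\frac1p.$$
By Mertens' theorem for $K$ (a consequence of the simple pole of $\zeta_K$ at $s=1$) and for $\Q$, both sums equal $\log\log X+O_K(1)$, so this difference is $O_K(1)$. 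Therefore the two Euler products are comparable up to constants depending on $K$ and the bound on $g$, which yields the corollary with implied constant depending on $K$.
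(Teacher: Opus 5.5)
Your proposal is correct and follows the paper's proof exactly: the paper's argument is the one-line application of Theorem \ref{thm: lower bound sieve} to $f(n)=\sum_{N(\fa)=n} g(\fa)$. Your checks of the two sieve hypotheses (discarding the degree-$\geq 2$ primes, which contribute $o(y)$) and your comparison of the two Euler products via Mertens' theorem over $K$ and $\Q$ fill in details the paper leaves implicit; the only superfluous point is the worry about vanishing factors, since nonnegativity already gives $1+(f(p)-1)/p\geq 1/2$ and $1+(g(\fp)-1)/N(\fp)\geq 1/2$.
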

\begin{proof}
    Apply Theorem \ref{thm: lower bound sieve} with the nonnegative multiplicative function $$f(n) = \sum_{\substack{\fa\subset \calO_K \\ N(\fa)=n}} g(\fa).$$
\end{proof}
\begin{remark}[Non-existence of a lower bound sieve for polynomial values]
    One might expect, given Theorem \ref{thm: lower bound sieve}, that a similar bound would hold for the sum: $$\sum_{n\leq X} f(|P(n)|).$$
    However, this can not happen; for instance, take $f(p) = \mathbf{1}_{p\equiv 3 \bmod 4}$ and $P(x) = x^2+1$. Then it is clear that $$\sum_{n\leq X}f(|P(n)|)=0.$$
    However, if we consider what the analogous lower bound to Theorem \ref{thm: lower bound sieve} would produce,
    $$X\prod_{p\leq X} \left(1+\frac{\varrho(p)(f(p)-1)}{p}\right) = X \prod_{\substack{p\leq X \\ p\equiv 1 \bmod 4}}\left(1-\frac{2}{p}\right)\gg X\log(X)^{-1},$$
    this causes immediate issues.  
\end{remark}

\section{Linear polynomials}\label{sec: linear}
In this section, we prove Theorem \ref{thm: linear}. Recall that $K$ is a number field and $\pi$ is an automorphic representation of $\GL_2(\A_K)$ that is tempered at all finite places of $K$. Consequently, we know that the normalized Hecke eigenvalues satisfy that 
$$|\lambda_\pi(\fp)| \leq 2,$$
for all prime ideals $\fp\subset \calO_K$. 

To prove the theorem, we split the study into different cases. In the first two subsection, we prove that the upper bound holds whenever $\pi$ is cuspidal. In the final subsection (in \S\ref{subsec: linear lower}), we will show that when $\pi$ is not cuspidal, a sufficient lower bound holds. \medskip

For the upper bound, we first apply Theorem \ref{thm: sieve nair}, as the assumptions are satisfied: 
\begin{equation}
    \sum_{N(\fa)\leq X} |\lambda_\pi(\fa)| \ll_K X\exp\left(\sum_{N(\fp)\leq X} \frac{|\lambda_\pi(\fp)|-1}{N(\fp)}\right).
\end{equation}
In order to study the sum
$$S_\pi(X) := \sum_{N(\fp)\leq X} \frac{|\lambda_\pi(\fp)|-1}{N(\fp)},$$
we relate $|\lambda_\pi(\fp)|$ to the Hecke eigenvalues of $\sym^2(\pi)$, the automorphic representation on $\GL_3(\A_K)$. This causes us to need to break the proof into two cases: when $\pi$ is dihedral and when $\pi$ is non-dihedral.

\subsection{Galois representations}\label{subsec: dihedral}
First, we tackle the case when $\pi$ is a Galois representation, and hence has finite image. There is a Galois extension $L/K$ such that 
$$\pi:\Gal(L/K) \rightarrow \textrm{GL}_2(\C).$$ 
Additionally, the Hecke eigenvalues are of the form $$\lambda_\pi(\fp) = \Tr(\pi(\Frob_\fp)) = \chi_\pi(\Frob_\fp),$$
where $\chi_\pi$ denotes the character of the representation $\pi$. In this case, we can see that 
\begin{equation}
    S_\pi(X) = \sum_{\sigma\in \Gal(L/K)} |\chi_\pi(\sigma)| \sum_{\substack{N(\fp)\leq X \\ \Frob_\fp = \sigma}} \frac{1}{N(\fp)} - \log\log(X) + O_K(1).
\end{equation}
Recall that $\chi_\pi$ is invariant under conjugation and hence we can apply the Chebotarev density theorem to determine that 
\begin{equation}
    S_\pi(X) = \left(\frac{1}{|\Gal(L/K)|} \sum_{\sigma\in \Gal(L/K)} |\chi_\pi(\sigma)| - 1\right)\cdot  \log\log(X) + O_K(1).
\end{equation}

In order to establish Theorem \ref{thm: linear} in this case, we need to prove that
\begin{equation}\label{eq: galois rep < 1}
    \frac{1}{|\Gal(L/K)|} \sum_{\sigma\in \Gal(L/K)} |\chi_\pi(\sigma)| < 1.
\end{equation}
To do so, we apply Cauchy-Schwarz: 
\begin{equation}
    \frac{1}{|\Gal(L/K)|} \sum_{\sigma\in \Gal(L/K)}|\chi_\pi(\sigma)| \leq \left(\frac{1}{|\Gal(L/K)|} \sum_{\sigma\in \Gal(L/K)} |\chi_\pi(\sigma)|^2\right)^{1/2}.
\end{equation}
Since $\pi$ is an irreducible representation, we know that 
\begin{equation}
    \frac{1}{|\Gal(L/K)|} \sum_{\sigma\in \Gal(L/K)} |\chi_\pi (\sigma)|^2 =1.
\end{equation}
Observe that the Cauchy-Schwarz inequality is in fact a strict inequality unless $|\chi(\sigma)|$ is equal for every $\sigma\in \Gal(L/K)$. On the other hand, since $\pi$ is cuspidal, it is a nontrivial irreducible representation and hence $\chi(1) = \dim(\pi) \geq 2$. Thus, in this case, Cauchy-Schwarz is a strict inequality. 

Furthermore, since $|\chi_\pi(\sigma)| \in \Z$ for each $\sigma$, we can see that 
\begin{equation}S_\pi(X) \leq  - |\Gal(L/K)|^{-1}\cdot \log\log(X) + O_K(1),\end{equation}
and thus we have that 
\begin{equation}
    \sum_{N(\fa)\leq X} |\lambda_\pi(\fa)| \ll_K X \log(X)^{-|\Gal(L/K)|^{-1}}.
\end{equation}.

\begin{remark}
    For a fixed Galois representation, we can understand completely the $L^1$-norm of $\pi$, since we can compute all of the moments of $\pi$. Indeed, the asymptotic holds: $$\sum_{N(\fa)\leq X} |\lambda_\pi(\fa)| \sim c_\pi X\log(X)^{\beta_\pi}$$
    where $\beta_\pi$ is one fewer than the $L^1$-norm of the trace function of $\pi$.  
\end{remark}

\begin{remark}
    One can instead look at cuspidal representations of the Weil group $W_{K}$ with finite image and derive a similar logarithmic savings over the trivial bound for the $L^1$-norm of $\pi$. 
\end{remark}

\subsection{Using the second and fourth moment}\label{subsec: nondihedral}
Now, let us assume that $\pi$ is \textit{any} cuspidal automorphic representation of $\GL_2(\A_K)$ satisfying the Ramanujan-Petersson conjecture; we will use information about the $L^2$-norm and $L^4$-norm of $\pi$ to derive nontrivial savings on the $L^1$-norm of $\pi$. We follow the proof of Elliott, Moreno, and Shahidi \cite{EMS} regarding the Ramanujan $\tau$-function. Similar proofs also appear in the work of Chiriac and Yang \cite{ChiriacYang}, and Holowinsky \cite{HolowinskyShiftedConvolution}.

Let us define 
\begin{equation}\label{eq: def of delta}
    \delta := \inf_{-1\leq y\leq 2} y^{-2}(1+y/2-(1+y)^{1/2}) \approx 0.067.
\end{equation}
We remark that the definition of $\delta$ is strictly positive and independent of $\pi$.
By the definition of $\delta$, the following relation holds for any prime ideal $\fp$:
\begin{align}\label{eq: relation between absolute and symmetric powers}
    |\lambda_\pi(\fp)|-1&\leq \frac{1}{2}(\lambda_{\pi}(\fp)\lambda_{\overline{\pi}}(\fp)-1) - \delta \left(\lambda_{\pi}(\fp)\lambda_{\overline{\pi}}(\fp)-1\right)^2\\
    &\leq -\delta + \left(\frac{1}{2}+ 2\delta\right)\left(\lambda_{\pi}(\fp)\lambda_{\overline{\pi}}(\fp)-1\right) - \delta \left(\lambda_\pi(\fp)^2 \lambda_{\overline{\pi}}(\fp)^2 - 2\right).
\end{align}
(Observe that we use that the eigenvalues of the Hecke operators are real.)
Let $\pi\boxtimes \pi'$ denote the Rankin-Selberg convolution of $\pi$ and $\pi'.$
From the above inequality, we know 
\begin{equation}
    S_\pi(X) \leq -\delta \log\log(X) + \left(1/2 + 2\delta\right) \cdot T_{\pi\boxtimes \overline{\pi}}(X) - \delta \cdot T_{\pi\boxtimes \pi \boxtimes\overline{\pi}\boxtimes\overline{\pi}}(X) +O_K(1), 
\end{equation}
where we define 
\begin{equation}
    T_{\pi\boxtimes \overline{\pi}}(X)  = \sum_{N(\fp)\leq X} \frac{\lambda_{\pi\boxtimes\overline{\pi}}(\fp)-1}{N(\fp)}, \hspace{.5cm} T_{\pi\boxtimes \pi \boxtimes\overline{\pi}\boxtimes\overline{\pi}}(X) = \sum_{N(\fp)\leq X} \frac{\lambda_{\pi\boxtimes \pi \boxtimes \overline{\pi}\boxtimes\overline{\pi}}(\fp)-2}{N(\fp)}.
\end{equation}
In other words, $T_{\pi\boxtimes\overline{\pi}}$ measures the second moment of $\pi$ and $T_{\pi\boxtimes\pi\boxtimes\overline{\pi}\boxtimes\overline{\pi}}$ measures the fourth moment. At this point, we split into the case when $\pi$ is \textit{dihedral} and \textit{non-dihedral}. \\ 

If $\pi$ is a dihedral representation, then it must also be a representation of the Weil group $W_{K}$ (see \cite{KimShahidi} for a classification of representations with a noncuspidal symmetric power.) Moreover, since $\pi$ is dihedral, it will be a representation of $D_{2n}$ for some $n\in [3,\infty]$, where we include the infinite dihedral group and exclude $D_2$ and $D_4$ which have no two-dimensional irreducible representations.

Now if $\pi$ is dihedral Galois representation with $\Gal(L/K)\cong D_{2n}$, then \S\ref{subsec: dihedral} shows that $$\sum_{N(\fa)\leq X} |\lambda_\pi(\fa)|\ll X\log(X)^{-1/2n}.$$
However, this exponent is not uniform in the choice of cuspidal representation $\pi$. So, let us present a separate argument to give the uniform lower bound on $c_\pi$ in the case when $\pi$ is dihedral. 

If $\pi$ is a dihedral representation, then there must exist a quadratic extension $L/K$ such that $\pi = \Ind_{W_L}^{W_K} \theta$, where $\theta$ denotes a Hecke character. We can further assume that $\ord(\theta)>2$, as otherwise $\pi$ would be a two-dimensional irreducible representation of $D_4$ which do not exist. Since $\pi$ is an induced representation, when a prime ideal $\fp$ splits in $L$, say as $\fp = \frakP \overline{\frakP}$, we have that 
\begin{equation}\lambda_\pi(\fp) = \theta(\frakP) + \overline{\theta(\frakP)}.\end{equation}
Further, for all but finitely many primes $\fp$ such that $\fp$ does not split, $\lambda_\pi(\fp)=0$. Since $\pi$ is tempered at all finite places, for each prime $\fp$ that splits in $L$, we can assign an angle $\alpha_\fp \in [0,1]$ such that 
\begin{equation}
    \lambda_\pi(\fp) = |e^{2\pi i \alpha_\fp}+e^{-2\pi i \alpha_\fp}| = 2|\cos(2\pi \alpha_\fp)|.
\end{equation}
Now, we can write that 
\begin{equation}
    S_\pi(X) = \sum_{\substack{N(\fp)\leq X\\ \fp = \frakP\overline{\frakP}}} \frac{2|\cos(2\pi \alpha_\fp)|-1}{N(\fp)} -\frac{1}{2}\log\log (X)+  O_{\pi,K}(1).
\end{equation}

The cosine identity gives us the following useful fact:
 \begin{equation}
     2|\cos(2\pi \alpha_\fp)| \leq \frac{3}{2}+\frac{1}{2}\cos(4\pi \alpha_\fp).
 \end{equation}
 Thus,  
 \begin{equation}
     \sum_{\substack{N(\fp)\leq X \\ \fp = \frakP\overline{\frakP}}} \frac{2|\cos(2\pi \alpha_\fp)|-1}{N(\fp)} \leq  \sum_{\substack{N(\fp)\leq X \\ \fp = \frakP\overline{\frakP}}} \frac{1}{2N(\fp)} + \frac{\cos(2\pi \alpha_\fp)}{2N(\fp)} = \sum_{\substack{N(\fp)\leq X \\ \fp = \frakP\overline{\frakP}}} \frac{1}{2N(\fp)} + \frac{\theta(\frakP)^2 + \theta(\overline{\frakP})^2}{2N(\fp)}.
 \end{equation}
Evaluating the first of these summands, we can rearrange to see that 
\begin{equation}
    S_\pi(X) \leq \frac{1}{4}\log\log(X) - \frac{1}{2}\log\log(X) + \frac{1}{2}\sum_{N_{L/\Q}(\frakP)\leq X}\frac{\theta(\frakP)^2}{N_{L/\Q}(\frakP)}+O_{\pi,K}(1).
\end{equation}
Since $\ord(\theta)>2$, $\theta^2$ will be a nontrivial Hecke character of $L$. Hence, 
\begin{equation}
    \sum_{N_{L/\Q}(\frakP)\leq X} \frac{\theta(\frakP)^2}{N_{L/\Q}(\frakP)} = O_{\pi,K}(1).
\end{equation}
With this in mind, we can see that when $\pi$ is dihedral, 
\begin{equation}
    S_\pi(X) \leq  -\frac{1}{4}\log\log(X)+O_{\pi,K}(1),
\end{equation}
and in this case, we can take $c_\pi > 1/4.$ Take $\delta'=1/5$ for our future comparison, as the non-dihedral case will give the eventual lower bound on $c_\pi$ stated in Theorem \ref{thm: linear}. \\

So, it remains to analyze when $\pi$ is not dihedral. We claim that in this case:
\begin{equation}\label{eq: claim convergence of pi x pi}
     T_{\pi\boxtimes \overline{\pi}}(X) = O_\pi (1), \qquad T_{\pi\boxtimes \pi \boxtimes\overline{\pi}\boxtimes\overline{\pi}}(X) = O_\pi (1).
\end{equation}
Before we prove the above claim, we make the observation that both sums converge if and only if the following limits exist:
\begin{equation}\label{eq: limit pi x pi}
    \lim_{s\rightarrow 1^+} \zeta_K(s)^{-1} L(s,\pi\boxtimes \overline{\pi}),
\end{equation}
\begin{equation}\label{eq: limit pi x 4}
    \lim_{s\rightarrow 1^+} \zeta_K(s)^{-2} L(s,\pi\boxtimes\pi\boxtimes\overline{\pi}\boxtimes \overline{\pi}). 
\end{equation}
\medskip

Let us start with the $T_{\pi\boxtimes\overline{\pi}}(X)$ and the $L$-function $L(s, \pi\boxtimes \overline{\pi})$. Since $\pi$ is cuspidal, we know that $L(s,\pi\boxtimes\overline{\pi})$ has a \textit{simple pole} at $s=1$; consequently, the limit \eqref{eq: limit pi x pi} exists and hence the first part of \eqref{eq: claim convergence of pi x pi} is true. Another way of viewing this statement is that $L(s,\pi,\text{Ad})$ is analytic at $s=1$, where $\text{Ad}(\pi)$ is the Gelbart-Jacquet lift of \cite{GelbartJacquet}.

Next, we can decompose $\pi\boxtimes \overline{\pi} = 1 \boxplus \text{Ad}^2(\pi)$; here we use $\boxplus$ to denote the isobaric sum (see \cite{LanglandsIsobaric}). If $\pi$ is non-dihedral, we know that $\text{Ad}^2(\pi)$ is cuspidal. Now we consider $T_{\pi\boxtimes \pi\boxtimes\overline{\pi}\boxtimes\overline{\pi}}(X)$ and \eqref{eq: limit pi x 4}. From our decomposition of $\pi\boxtimes \overline{\pi}$, we know that $L(s,\pi\boxtimes\pi\boxtimes\overline{\pi}\boxtimes \overline{\pi})$ has the same analytic behavior at $s=1$ as the product $\zeta_K(s) L(s,\pi,\text{Ad}^2)^2 L( s,\text{Ad}^2(\pi)\boxtimes \text{Ad}^2(\overline{\pi}))$ -- we know that $L(s,\pi,\text{Ad}^2)$ is entire at $s=1$ since $\text{Ad}^2(\pi)$ is a cuspidal automorphic representation of $\GL_3(\A_K)$. Similarly, we know that since $\text{Ad}^2(\pi)$ is cuspidal, $L(s,\text{Ad}^2(\pi)\boxtimes \text{Ad}^2(\overline{\pi}))$ has a pole of order one at $s=1$; here $\text{Ad}^2(\pi)\boxtimes\text{Ad}^2(\overline{\pi})$ is a $\GL_3\times \GL_3$ Rankin-Selberg convolution. 
Hence, this $L$-function has a pole of \textit{order two} at $s=1$. Thus, the limit \eqref{eq: limit pi x 4} exists and this establishes the second half of claim \eqref{eq: claim convergence of pi x pi}. 

Hence, in the case when $\pi$ is non-dihedral, we know that 
\begin{equation}
    \sum_{N(\fa)\leq X} |\lambda_\pi(\fa)| \ll_\pi X \log(X)^{-\delta}.
\end{equation}
Now, taking $c= \min(\delta, \delta')$, this completes the cuspidal case of Theorem \ref{thm: linear}.

\subsection{Lower bounds when $\pi$ is not cuspidal}\label{subsec: linear lower}
Let $\pi$ be a non-cuspidal representation of $\GL_2(\A_K)$, so $\pi$ has the isobaric decomposition (see \cite{LanglandsIsobaric}) $$\pi = \psi_1 \boxplus \psi_2,$$
where $\psi_1,\psi_2$ are $\GL_1(\A_K)$-representations, i.e. quasi-characters. 
Since $\pi$ is tempered at each finite place, we take $\psi_1$ and $\psi_2$ to be unitary. 

Let us assume that $|\lambda_\pi(\fp)|$ satisfies the assumptions of Theorem \ref{thm: lower bound sieve} for now (we will check this at the end). This would then imply that 
\begin{equation}
    \sum_{N(\fa)\leq X}|\lambda_\pi(\fa)| \gg_K X \exp\left(\sum_{N(\fp)\leq X} \frac{|\lambda_\pi(\fp)|-1}{N(\fp)}\right).
\end{equation}
Now, since $\pi = \psi_1\boxplus \psi_2$, we know that 
\begin{equation}
    \lambda_\pi(\fp) = \psi_1(\fp) + \psi_2(\fp).
\end{equation}
Hence, we can estimate:
\begin{equation}
    S_\pi(X) = \sum_{N(\fp)\leq X} \frac{|\psi_1(\fp)+\psi_2(\fp)|-1}{N(\fp)} = \sum_{N(\fp)\leq X} \frac{|1+\psi_1^{-1}\psi_2(\fp)|-1}{N(\fp)}.
\end{equation}
Define the character $\psi:= \psi_1^{-1}\psi_2$ and define for each prime $\fp$, the angle $\theta_\fp\in [0,1]$ such that $\psi(\fp) = e(2\pi i \theta_\fp).$ We have that 
\begin{equation}\label{eq: S_pi in terms of cos}
    S_\pi(X) = \sum_{N(\fp)\leq X} \frac{\sqrt{2+2\cos(2\pi \theta_\fp)}-1}{N(\fp)}.
\end{equation}

We split our analysis into two cases: when $\psi=1$ and when $\psi$ is nontrivial. First, if $\psi=1$, then $\theta_\fp = 0$ for all primes $\fp$. So, we have that 
\begin{equation}
    S_\pi(X) = \sum_{N(\fp)\leq X} \frac{1}{N(\fp)} = \log\log(X)+O_K(1),
\end{equation}
and thus 
\begin{equation}
    \sum_{N(\fa)\leq X} |\lambda_\pi(\fa)| \gg_K X\log(X).
\end{equation}
This completes the lower bound in this case.

Now, if $\psi\neq 1$, then we know that for any $A>0$: 
\begin{equation}\label{eq: equidistribute theta}
    \sum_{N(\fp)\leq X} \cos(2\pi \theta_\fp) = O_\psi(X\log(X)^{-A}).
\end{equation}
Consequently, the following hold:
\begin{equation}
    \sum_{N(\fp)\leq X} 2+2\cos(2\pi \theta_p) = \frac{2X}{\log(X)} + O_\psi(X(\log X)^{-A}).
\end{equation}
Observing that $\max_{\theta\in [0,1]}\sqrt{2+2\cos(2\pi \theta_p)} = 2$, we establish that
\begin{equation}\label{eq: sqrt(2+2cos)}
    \sum_{N(\fp)\leq X} \sqrt{2+2\cos(2\pi \theta_p)} \geq \frac{X}{\log(X)} + O_\psi(X(\log X)^{-A}).
\end{equation}
Hence, through partial summation, we have that 
\begin{equation}\label{eq: distribution of angles}
   \sum_{N(\fp)\leq X} \frac{\sqrt{2+2\cos(2\pi \theta_\fp)}}{N(\fp)} \geq \log\log(X) + O_{\pi,K}(1).
\end{equation}
Consequently, from the equation above in combination with \eqref{eq: S_pi in terms of cos}, we know 
\begin{equation}
    S_\pi(X) \ll_{\pi,K} 1.
\end{equation}
As such, there can not exist any positive constant $c_{\pi}>0$ such that $S_\pi(X) \leq -c_\pi \log\log(X) + o_{\pi,K}(\log\log(X))$, as would be necessary for the upper bound in Theorem \ref{thm: linear} to exist. 
Thus, in order to complete the proof of Theorem \ref{thm: linear}, it suffices to prove that the multiplicative function $|\lambda_\pi(\fp)|$ satisfies the assumptions of Theorem \ref{thm: lower bound sieve}.
\medskip

Finally, let us check that when $\pi = \psi_1\boxplus \psi_2$
\begin{equation}
    \sum_{N(\fp)\leq y} |\lambda_\pi(\fp)|\log(N(\fp)) \gg y.
\end{equation}
Again, let $\psi = \psi_1^{-1}\psi_2$ and $\psi(\fp) = e(2\pi i \theta_\fp).$ Then we can write 
\begin{equation}
    \sum_{N(\fp)\leq y} |\lambda_\pi(\fp)|  \log(N(\fp)) = \sum_{N(\fp)\leq y} |1+\psi(\fp)| \log(N(\fp)) = \sum_{N(\fp)\leq y} \sqrt{2+2\cos(2\pi \theta_\fp)}\cdot \log(N(\fp)).
\end{equation}
By partial summation in combination with \eqref{eq: sqrt(2+2cos)}, we see that:
\begin{equation}
    \sum_{N(\fp)\leq y} \sqrt{2+2\cos(2\pi \theta_\fp)}\cdot \log(N(\fp))\gg y
\end{equation}
as desired. Thus $|\lambda_\pi(\fp)|$ satisfies the conditions of Theorem \ref{thm: lower bound sieve}. \qed

\section{Higher degree polynomials}\label{sec: poly}

In the following section, we prove Theorem \ref{thm: poly} and Theorem \ref{thm: noncuspidal}. Before doing so, we will review base change for $\GL_1$ and $\GL_2$ automorphic representations, as this theory will be critical in both sections. For a more comprehensive discussion of base change, see \cite{GerardinLabesseBaseChange,LanglandsBaseChange}.

Let us recall the following notation:
\begin{itemize}
    \item $F$ is a fixed number field; in our applications it will always be $F=\Q$,
    \item $G=\GL_1$ or $\GL_2$ defined over $F$, 
    \item For a number field extension $E/F$, $G_{E/F}$ is the restriction of scalars of $G$ over $E$, i.e. $G_{E/F}(\A_F) = G(\A_E)$,
    
    \item The Weil group of a number field $F$ is given by a tuple $(W_F, \varphi, \{r_E\}_{E/F})$ such that $W_F$ is a topological group with a continuous homomorphism:
    $$\varphi: W_F \rightarrow \Gal(\overline{F}/F)$$
    with dense image that satisfies:
    \begin{itemize}
    \item For any finite extension $E/F$ with $E\subset \overline{F}$, define $W_E = \varphi^{-1}(\Gal(\overline{F}/E))$. Since $\varphi$ is continuous, $W_E$ is open in $W_F$; additionally, it induces a bijection 
    $$W_F/W_E \xrightarrow{\sim} \Gal(\overline{F}/F) / \Gal(\overline{F}/E) \cong \Gal(E/F),$$
    where the last isomorphism holds when $E/F$ is Galois. 
    \item For each finite extension $E/F$, the map $r_E$ gives an isomorphism of topological groups 
    $$r_E: \A_E^\times / E^\times \xrightarrow{\sim} W_E^{\text{ab}},$$
    where $W_E^{\text{ab}}$ is the maximal abelian Hausdorff quotient of $W_E$.
    Furthermore, these maps $r_E$ satisfy four conditions that are described explicitly in \cite[(W1),(W2),(W3),(W4)]{TateNTBackground}.
    \end{itemize}
    \item For a one-dimensional representation $\theta$ of $W_E$, $\Ind_{W_E}^{W_F}\theta$ denotes automorphic induction to the extension $E/F$.
\end{itemize}

We also set $\Pi(G)$ to denote the set of irreducible admissible automorphic representations of $G(\A_F)$. The base change map sends: 
$$\Pi(G)\rightarrow \Pi(G_{E/F})$$
$$\pi \mapsto \pi_{E/F}.$$
Let us assume that $E/F$ is a solvable number field extension; hence there exists a chain of subfields $$F=E_0\subsetneq E_1\subsetneq E_2\subsetneq \hdots \subsetneq E_{n-1} \subsetneq E_n = E$$
    where $E_{i+1}/E_i$ is a cylic extension of prime order. Base change on $L$-groups gives us a map $$\Pi(G) \rightarrow \Pi(G_{E_1/F}) \rightarrow \hdots \rightarrow \Pi(G_{E_{n-1}/F})\rightarrow \Pi(G_{E/F})$$
that is compatible with local data; when $G=\GL_2$, this map was constructed and proved by Langlands in \cite{LanglandsBaseChange}. \medskip
\subsubsection{The $\GL_1$ base change problem}
First, we review the case of $\GL_1$-base change. Let $\theta$ be a one-dimensional representation of $W_{F}$, i.e. a quasicharacter of $\A_{F}^\times / F^\times$. Let $E/F$ denote a cyclic extension. The base change lift $$\Pi(\GL_1(\A_F)) \rightarrow \Pi(\GL_1(\A_{E}))$$
is given explicitly by the map $$\theta \mapsto \theta\circ N_{E/F}=:\theta_{E}.$$
In particular, for an ideal $I\subset E$, $$\lambda_{\theta_{E}}(I) = \theta(N_{E/F}(I)) = \lambda_{\theta_{F}}(N_{E/F}(I)).$$
Next, observe that $\widehat{\Gal(E/F)}$ are themselves quasi-characters that are trivial on $N_{E/F}(\A_{E}^\times/ E^\times)$ and hence will be trivial after base change to $E.$ 
\begin{prop}[{\cite[Proposition 1]{GerardinLabesseBaseChange}}]\label{prop: GL1base change}
Let $E/F$ be a cyclic extension. 
    The lifting $\theta_{E}$ will be cuspidal if and only if there exists $\zeta\in \widehat{\Gal(E/F)}$ such that $\theta = \zeta.$ 
\end{prop}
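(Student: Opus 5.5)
The plan is to read the statement through global class field theory. For $\GL_1$ every automorphic representation is cuspidal, so I will take the content of Proposition \ref{prop: GL1base change} to be the criterion for when the lift $\theta_E = \theta\circ N_{E/F}$ becomes trivial. I will prove: $\theta_E$ is the trivial character of $\A_E^\times/E^\times$ if and only if $\theta = \zeta$ for some $\zeta\in\widehat{\Gal(E/F)}$. Here $\widehat{\Gal(E/F)}$ is viewed as a set of idele class characters of $F$ via Artin reciprocity, exactly as in the sentence preceding the proposition.

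\emph{Step 1: the easy direction.} Suppose $\theta=\zeta$ with $\zeta\in\widehat{\Gal(E/F)}$. By construction $\zeta$ is the pullback of a character of $\Gal(E/F)$ along the Artin map $\A_F^\times/F^\times\to\Gal(E/F)$. This map kills the norm group $N_{E/F}(\A_E^\times)F^\times$. Hence $\theta_E(x)=\zeta(N_{E/F}x)=1$ for all $x\in\A_E^\times$. Locally this is the statement that unramified characters become trivial at split places, but the global formulation suffices.

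\emph{Step 2: the converse.} Suppose $\theta\circ N_{E/F}\equiv 1$. Then $\theta$ is trivial on $N_{E/F}(\A_E^\times)$, and also on $F^\times$, so $\theta$ factors through $C_F/N_{E/F}C_E$, where $C_F=\A_F^\times/F^\times$. The reciprocity law for the abelian (here cyclic) extension $E/F$ gives an isomorphism $C_F/N_{E/F}C_E\xrightarrow{\sim}\Gal(E/F)$. This is compatible with the maps $r_E$ in the Weil group axioms recalled above: the composite $C_F\to W_F^{\mathrm{ab}}\to\Gal(E/F)$ has kernel equal to the norm group. Therefore $\theta$ is the pullback of a character $\zeta$ of the finite group $\Gal(E/F)$; in particular $\theta$ has finite order dividing $[E:F]$, and $\theta=\zeta$.

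\emph{Main obstacle.} The only substantive input is the identification of the kernel of the Artin map with the norm subgroup, namely the existence and reciprocity theorems of class field theory, which I would simply cite. The subtler point is bookkeeping rather than mathematics: the word ``cuspidal'' in the statement has to be interpreted correctly. In the later application to $\GL_2$, where $\pi=\Ind_{W_F}^{W_\Q}\theta$ and one asks whether the base change of $\pi$ to $K$ stays cuspidal, the relevant condition is whether $\theta_K$ coincides with its Galois conjugate. That reduces to asking whether a ratio of characters becomes trivial after base change. So the formulation ``$\theta_E$ trivial iff $\theta\in\widehat{\Gal(E/F)}$'' is the version I would prove and use. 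Because the argument via $C_F/N_{E/F}C_E\cong\Gal(E/F)$ holds for any abelian extension, it also handles each cyclic step $E_{i+1}/E_i$ of the solvable tower.
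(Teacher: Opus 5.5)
Your argument is correct and is essentially the paper's route: the paper proves only the easy direction (characters of $\Gal(E/F)$ are trivial on $N_{E/F}(\A_E^\times/E^\times)$) and cites G\'erardin--Labesse for the converse, which is exactly the reciprocity isomorphism $C_F/N_{E/F}C_E\cong\Gal(E/F)$ that you invoke. Your reading of ``cuspidal'' as ``$\theta_E$ is trivial'' is also the right one, since that is how the paper applies the proposition later (``$\theta^2_K$ is trivial, i.e.\ noncuspidal, if and only if $\theta^2\in\widehat{G}$'').
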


\subsubsection{The $\GL_2$ base change problem}
Now, we discuss the more complicated case of $\GL_2$-base change. The base change for global fields is discussed in \cite[Theorem 2]{GerardinLabesseBaseChange}. We recall the relevant parts of the theorem:
\begin{thm}[{\cite[Theorem 2 (a),(b)]{GerardinLabesseBaseChange}}]\label{thm: base change GL2}
Let $\pi\in \Pi(G)$ be a cuspidal representation. Then there exists a unique lifting $\pi_{E/F}\in \Pi(G_{E/F})$. Since $E/F$ is solvable, denote the chain of cyclic extension: 
$$F=E_0\subsetneq E_1 \subsetneq \hdots \subsetneq E_{n-1}\subsetneq E_n=E.$$
Then, $\pi_{E/F}$ is not cuspidal if and only if there exists $E_{i+1}/E_{i}$ a quadratic extension and $\theta$ a quasicharacter of $\A_{E_{i+1}}^\times/ E_{i+1}^\times$ such that $$\pi_{E_i} = \Ind_{W_{E_{i+1}}}^{W_{E_{i}}} \theta.$$ 
In this case, $\pi_{E_{i+1}}$ is a principal series given by quasicharacter $\theta$, i.e. $\pi(\theta,^{\sigma}\theta).$
\end{thm}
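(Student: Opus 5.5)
This is the base change theorem of Langlands in the form recorded by G\'erardin--Labesse, so the plan is to reduce everything to the cyclic prime-degree case and iterate along the chain $F=E_0\subsetneq E_1\subsetneq\cdots\subsetneq E_n=E$. For a single step $E_{i+1}/E_i$, cyclic of prime degree $\ell$ with generator $\sigma$, I would invoke Langlands' theorem \cite{LanglandsBaseChange}. It is proved by comparing the trace formula for $\GL_2(\A_{E_i})$ with the $\sigma$-twisted trace formula for $\GL_2(\A_{E_{i+1}})$, and it produces a lift $\pi_{E_{i+1}}\in\Pi(\GL_2(\A_{E_{i+1}}))$ that is compatible with the local (unramified) base change at almost all places. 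I would not reprove this; the twisted trace formula comparison is the genuinely deep input and the main obstacle.

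\emph{Uniqueness.} At each step it follows from strong multiplicity one (Jacquet--Shalika) for isobaric representations: the lift is determined by its Satake parameters at almost all places, and these are prescribed by local base change. Composing the steps gives existence and uniqueness of $\pi_{E/F}$. If some intermediate lift $\pi_{E_i}$ is already non-cuspidal, say $\pi(\mu_1,\mu_2)$, its further lifts are $\pi(\mu_1\circ N,\mu_2\circ N)$, using the $\GL_1$ case in Proposition \ref{prop: GL1base change}. So non-cuspidality, once it occurs, persists, and it suffices to find the first step where cuspidality fails.

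\emph{The cuspidality criterion.} Suppose $\pi_{E_i}$ is cuspidal. By local compatibility,
$$L(s,\pi_{E_{i+1}}\times\widetilde{\pi}_{E_{i+1}})=\prod_{\chi\in\widehat{\Gal(E_{i+1}/E_i)}}L(s,\pi_{E_i}\times(\widetilde{\pi}_{E_i}\otimes\chi))$$
up to finitely many Euler factors. The left side has a pole of order $1$ at $s=1$ exactly when $\pi_{E_{i+1}}$ is cuspidal. On the right, each factor has a pole if and only if $\pi_{E_i}\cong\pi_{E_i}\otimes\chi$. So $\pi_{E_{i+1}}$ is non-cuspidal if and only if $\pi_{E_i}$ admits a nontrivial self-twist by some $\chi$ trivial on $N_{E_{i+1}/E_i}$.
\begin{itemize}
\item Comparing central characters gives $\chi^2=1$. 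Since $\chi$ is nontrivial of order dividing $\ell$, this forces $\ell=2$.
\item By Labesse--Langlands, a self-twist by the quadratic character of $E_{i+1}/E_i$ means $\pi_{E_i}=\Ind_{W_{E_{i+1}}}^{W_{E_i}}\theta$ for a quasicharacter $\theta$ of $\A_{E_{i+1}}^\times/E_{i+1}^\times$.
\item Conversely, if $\pi_{E_i}=\Ind_{W_{E_{i+1}}}^{W_{E_i}}\theta$, then at split places the Satake parameters of $\pi_{E_{i+1}}$ are $\{\theta(\frakP),{}^\sigma\theta(\frakP)\}$, and at inert places the parameters of $\pi_{E_{i+1}}$ agree with $\{\theta(\frakP),{}^\sigma\theta(\frakP)\}$ as well.
\end{itemize}
By strong multiplicity one, this gives $\pi_{E_{i+1}}=\pi(\theta,{}^\sigma\theta)$, which is the final assertion of the theorem.
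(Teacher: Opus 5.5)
The paper does not prove this statement. It quotes it from G\'erardin--Labesse, where it is stated for a single cyclic extension of prime degree, and the chain version here is just that result iterated. Your reduction is exactly that iteration: take the first index where cuspidality fails, and note that non-cuspidality persists because $\pi(\mu_1,\mu_2)\mapsto\pi(\mu_1\circ N,\mu_2\circ N)$. Your proposal is correct in outline, with one step that needs an extra sentence (below).

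Where you genuinely differ from the source is the one-step cuspidality criterion. In Langlands' treatment, which G\'erardin--Labesse summarize, it comes out of the twisted trace formula comparison. That comparison also identifies the image of base change as the $\sigma$-stable representations and the fibres as orbits under twisting by $\widehat{\Gal(E_{i+1}/E_i)}$. From these facts, a non-cuspidal lift $\pi(\mu_1,\mu_2)$ of a cuspidal $\pi_{E_i}$ must have $\mu_2={}^\sigma\mu_1\neq\mu_1$, which forces $\ell=2$ and $\pi_{E_i}=\pi(\Ind\mu_1)$.

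You instead use only existence and local compatibility of the lift. You detect non-cuspidality by counting poles in $L^S(s,\pi_{E_{i+1}}\times\widetilde{\pi}_{E_{i+1}})=\prod_\chi L^S(s,\pi_{E_i}\times\widetilde{\pi}_{E_i}\otimes\chi)$, using Jacquet--Shalika, and then invoke Labesse--Langlands. This imports Rankin--Selberg theory that postdates Langlands' book, but it is modular and carries over to cyclic base change for $\GL_n$. Your sentence ``each factor has a pole iff self-twist, so non-cuspidal iff self-twist'' would need non-vanishing on $\Re s=1$ for the ``if'' direction. Your logic never uses that direction, since you prove the converse by the direct Satake computation. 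The direction you do use needs only holomorphy of the non-self-twist factors.

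The step that needs more justification is the claim that the left side has a pole of order $1$ exactly when $\pi_{E_{i+1}}$ is cuspidal. If $\pi_{E_{i+1}}=\pi(\mu_1,\mu_2)$, the left side is
\begin{equation}
\zeta^S_{E_{i+1}}(s)^2\,L^S(s,\mu_1\mu_2^{-1})\,L^S(s,\mu_2\mu_1^{-1}).
\end{equation}
A pole of order $\ge 2$ is automatic only when $\mu_1\mu_2^{-1}$ is unitary; otherwise a zero of one of the last two factors could cancel part of the pole. This is easy to repair:
\begin{itemize}
\item Twisting by a power of $|\det|$, you may assume $\pi_{E_i}$ is unitary. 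Uniqueness of the lift then gives $\widetilde{\pi}_{E_{i+1}}\cong\overline{\pi_{E_{i+1}}}$.
\item Hence either $\mu_1,\mu_2$ are both unitary, or $\{\mu_1,\mu_2\}=\{\mu|\cdot|^{t},\mu|\cdot|^{-t}\}$ with $\mu$ unitary and $t\in\R$.
\item In the second case with $t\neq0$, the left side contains $\zeta^S_{E_{i+1}}(s+2t)\,\zeta^S_{E_{i+1}}(s-2t)$ and so has a pole at $s=1+2|t|$.
\item That contradicts the right side, a product of Rankin--Selberg $L$-functions of unitary cuspidal representations, which converges absolutely and is holomorphic for $\Re s>1$.
\end{itemize}
With this, every non-cuspidal lift gives a pole of order $2$ or $4$, and the rest of your argument goes through.
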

We further make the observation that for $\pi$ an automorphic representation of $\GL_2(\A_F)$, the Fourier coefficients of $\pi_{E/F}$ at prime ideals $\frakP\subset \calO_E$ satisfying that $N_{E/\Q}(\frakP)$ is prime will be given by the relation
\begin{equation}\label{eq: relation between coeff of base change}
    \lambda_{\pi_{E/F}}(\frakP) = \lambda_{\pi}(N_{E/F}(\frakP)).
\end{equation}
This connection between the Hecke eigenvalues of $\pi$ and $\pi_{E/F}$ will be the key relationship between the correlation sum of Theorem \ref{thm: poly} and this automorphic picture. 

\subsection{Cuspidal base change}
In this subsection, we prove Theorem \ref{thm: poly}, i.e. when the base change of $\pi$ to the splitting field $K$ of $P(x)$ is cuspidal. 

\begin{proof}[Proof of Theorem \ref{thm: poly}]
We will reduce Theorem \ref{thm: poly} to the estimates in Theorem \ref{thm: linear}, where $K$ is the splitting field of $P(x)$. Again, let us start by applying the sieve (Theorem \ref{thm: sieve nair}): 
\begin{equation}
    \sum_{n\leq X} |\lambda_\pi(|P(n)|)| \ll_{P} X \exp\left(\sum_{p\leq X} \frac{\varrho(p)(|\lambda_\pi(p)|-1)}{p}\right).
\end{equation}
We observe that since $\varrho(p) = \#\{x\bmod p: P(x) = 0\}$, at all but finitely many primes, 
$$\varrho(p) = \#\{\fp\subset \calO_K: N_{K/\Q}(\fp) = p\}.$$
Since $P(x)$ is irreducible, we have 
\begin{equation}
    \sum_{p\leq X}\frac{\varrho(p)|\lambda_\pi(p)|}{p} - \frac{\varrho(p)}{p} = \sum_{N(\fp)\leq X} \frac{|\lambda_\pi(N(\fp))|-1}{N(\fp)} + O_K(1). 
\end{equation}
Now by \eqref{eq: relation between coeff of base change}, we know that this gives the relationship
\begin{equation}
    \sum_{p\leq X}\frac{\varrho(p)(|\lambda_\pi(p)|-1)}{p} = \sum_{N(\fp)\leq X} \frac{|\lambda_{\pi_K}(\fp)|-1}{N(\fp)} + O_K(1) = S_{\pi_K}(X) + O_K(1),
\end{equation}
using the notation of \S\ref{sec: linear}. Since we have assumed that $\pi_K$ is a cuspidal automorphic representation of $\GL_2(\A_K)$, by the proof of Theorem \ref{thm: linear}, for some $c\geq 0.066,$
\begin{equation}
    S_{\pi_K}(X) \leq  -c \log\log(X)+O_{\pi,K}(1).
\end{equation}
Thus, we establish the desired estimate 
\begin{equation}
    \sum_{n\leq X} |\lambda_\pi(|P(n)|)| \ll_{\pi,P} X\log(X)^{-0.066}.
\end{equation}
\end{proof}

\subsection{Non-cuspidal base change}\label{subsec: noncuspidal}
Let $K$ again denote the splitting field of $P(x)$ and assume that it is solvable. Additionally, assume that $\pi_K$ is not cuspidal and want to determine the size of $$\sum_{n\leq X} |\lambda_\pi(|P(n)|)|.$$

\begin{proof}[Proof of Theorem \ref{thm: noncuspidal}]
First, let us determine some structure on these $\pi$ such that $\pi_K$ is not cuspidal. 
Since $K$ is solvable, there exists a chain of subfields: 
\begin{equation}
    \Q =E_0 \subsetneq E_1 \subsetneq \hdots \subsetneq E_{n-1}\subsetneq E_n = K
\end{equation}
with $E_{i+1}/E_i$ cyclic extensions of prime order. Since $\pi_K$ is not cuspidal, by Theorem \ref{thm: base change GL2}, we know that there exists some index $i$ such that $[E_{i+1}:E_i] = 2$ and $$\pi_{E_i} = \Ind_{W_{E_{i+1}}}^{W_{E_i}}\theta,$$
where $\theta$ is a quasi-character of $\A_{E_{i+1}}^\times/E_{i+1}^\times.$ Furthermore, this fact tells us that $$\pi = \Ind_{W_{E_{i+1}}}^{W_\Q} \theta.$$
Since $\pi$ is a two-dimensional representation, we claim that we must have that $E_i = \Q$ and $E_{i+1}$ a quadratic extension. Assume not -- then $[E_{i+1}:\Q]>2$. Since $\theta$ denotes a one-dimensional representation, $\dim(\pi) = [E_{i+1}:\Q]>2$, immediately giving us a contradiction to our assumption that $\pi$ is a two-dimensional representation. Finally, since $E_{i+1}$ must be a quadratic extension of $\Q$ and $E_i\subsetneq E_{i+1}$, we must have that $E_{i} = \Q.$

Let $\fp$ be a prime ideal of $\calO_{F}$ sitting over $p$. Then we know that 
\begin{equation}
    \lambda_{\pi}(p) = \sum_{\substack{N_{F/\Q}(\fp) = p}}\theta(\fp).
\end{equation}
Furthermore, for $\frakP\subset \calO_K$ be a prime ideal sitting above $\fp$, we also can see that 
\begin{equation}
    \lambda_{\pi_K}(\frakP) = \lambda_{\pi}(N_{K/\Q}(\frakP)) = \sum_{\substack{\fp \subset \calO_{F} \\ N_{K/\Q}(\frakP) = N_{F/\Q}(\fp)}} \theta(\fp).
\end{equation}

We return to the focus of Theorem \ref{thm: noncuspidal}: 
\begin{equation}
    \sum_{n\leq X} |\lambda_\pi(P(n))| \ll_P X \exp\left(\sum_{p\leq X} \frac{\varrho(p)(|\lambda_\pi(p)|-1)}{p}\right).
\end{equation}
Here we have again applied Theorem \ref{thm: sieve nair}. Since we know that 
\begin{equation}
    \sum_{p\leq X} \frac{\varrho(p)(|\lambda_\pi(p)|-1)}{p} = \sum_{\substack{\frakP\subset \calO_K \\ N_{K/\Q}(\frakP) \leq X}} \frac{|\lambda_{\pi_K}(\frakP)|-1}{N_{K/\Q}(\frakP)} +O_{K,\pi}(1),
\end{equation}
we find that we want to determine the size of 
\begin{align}
    S_{\pi_K}(X) &:= \sum_{\substack{\frakP\subset \calO_K \\ N_{K/\Q}(\frakP) \leq X}} \frac{|\lambda_{\pi_K}(\frakP)|-1}{N_{K/\Q}(\frakP)} = \sum_{\substack{\frakP\subset \calO_K \\ N_{K/\Q}(\frakP) \leq X}} \frac{1}{N_{K/\Q}(\frakP)} \Bigg(\Big|\sum_{\substack{\fp\subset \calO_{F} \\ N_{K/\Q}(\frakP) = N_{F/\Q}(\fp)}} \hspace{-.5cm}\theta(\fp)\Big|-1\Bigg) + O_{K,\pi}(1).
\end{align}
Since $\lambda_{\pi_K}(\frakP)$ only depends on the value of $N_{K/\Q} (\frakP) = p$, we group the summand by the prime ideals sitting above $p$. It is worth observing as well that since we sum over prime ideals, only those primes satisfying that $N_{K/\Q}(\frakP)$ is prime will contribute to the growth of the summand; the other primes can be grouped into the $O_{K,\pi}(1)$ term. Thus, we achieve that 
\begin{equation}
   S_{\pi_K}(X) = \sum_{p\leq X} \frac{\#\{\frakP \subset \calO_K: N_{K/\Q}(\frakP)=p\}}{p} \left(\left|\sum_{\substack{\fp\subset \calO_{F} \\ N_{F/\Q}(\fp) = p }}\theta(\fp)\right|-1\right) + O_K(1).
\end{equation}

Since $\theta$ is a unitary character of $F$ and $\lambda_{\pi}(p)\in \R$, we have a stronger replacement for \eqref{eq: relation between absolute and symmetric powers} that only utilizes the second moment (rather than both the second and fourth moment): 
\begin{equation}
    \left|\sum_{N_{F/\Q}(\fp) = p} \theta(\fp)\right| \leq \frac{3}{2} + \frac{1}{4}\left(\sum_{N_{F/\Q}(\fp) = p}\theta^2(\fp)\right).
\end{equation}
Furthermore, if $p$ is a prime such that there does not exists $\fp\subset \calO_{F}$ with $N_{F/\Q}(\fp) = p$, then $\lambda_{\pi}(p) = 0$. 
Thus, we have the upper bound 
\begin{multline*}
    S_{\pi_{K}}(X) \leq \frac{1}{2}\sum_{\substack{p\leq X \\ \exists \fp: N_{F/\Q}(\fp)=p}} \frac{\#\{\frakP\subset \calO_K: N_{K/\Q}(\frakP)=p\}}{p} \\
     - \sum_{\substack{p\leq X \\ \not\exists \fp: N_{F/\Q}(\fp)=p}} \frac{\#\{\frakP\subset \calO_K: N_{K/\Q}(\frakP)=p\}}{p} \\
     + \frac{1}{4}\sum_{\substack{p\leq X \\ \exists \fp: N_{F/\Q}(\fp)=p}} \frac{\#\{\frakP\subset \calO_K: N_{K/\Q}(\frakP)=p\}}{p} \times \left(\sum_{\substack{\fp\subset \calO_{F}\\ N_{F/\Q}(\fp) = p}} \theta^2(\fp)\right).
\end{multline*}
We make the following comments about these sums: (1) for the first sum, this is equivalent to counting
\begin{equation}
    \frac{1}{2}\sum_{N_{F/\Q}(\fp)\leq X}\frac{\#\{\frakP\subset \calO_K: N_{K/F}(\frakP) = \fp\}}{N_{F/\Q}(\fp)} + O_K(1) = \frac{1}{2}\log\log(X) + O_K(1),
\end{equation}
(2) the second sum is empty since $F\subset K$, (3) after counting the number of ideals with $N_{F/\Q}(\fp) = p$, the third sum becomes 
\begin{multline}
    \frac{1}{2} \sum_{\substack{N_{F/\Q}(\fp)\leq X}} \frac{\#\{\frakP\subset \calO_K: N_{K/F}(\frakP)= \fp\}}{N_{F/\Q}(\fp)}  \cdot \theta^2(\fp) + O_K(1)
    = \frac{1}{2} \sum_{N_{K/\Q}(\frakP)\leq X} \frac{\theta^2(N_{K/F}(\frakP))}{N_{K/\Q}(\frakP)} + O_K(1).
\end{multline}
Thus, we know that 
\begin{equation}
    S_{\pi_K}(X) \leq \frac{1}{2}\log\log(X) + \frac{1}{2} \sum_{N_{K/\Q}(\frakP)\leq X} \frac{\theta^2(N_{K/F}(\frakP))}{N_{K/\Q}(\frakP)} + O_K(1),
\end{equation}
the upper bound of Theorem \ref{thm: noncuspidal} (1) holds so long as $\theta^2\circ N_{K/F}$ is not a trivial character; in other words, we need to understand the \textbf{$\GL_1$-base change} of $\theta^2$ to the extension $K/E_i$. Observe that if $\theta$ has infinite order, we never have that $\theta^2\circ N_{K/F}$ is trivial and Theorem \ref{thm: noncuspidal} (1) falls. 

\subsubsection{Return to the $\GL_1$-base change problem}

By Proposition \ref{prop: GL1base change}, we know that $\theta^2_K$ is trivial, i.e. noncuspidal, if and only if $\theta^2 \in \widehat{G}$ for $G=\Gal(K/F).$ Let us denote $\psi = \theta^2$. We want to find another interpretation of our condition for when $\psi_K\equiv 1$. Consider the following diagram, where $L_\psi$ (resp. $L_\theta$) denotes the ray class field of $\psi$ (resp. $\theta$). Since $\ker(\theta) \subset \ker(\psi)$, we can take $L_\psi\subset L_\theta$. 

\[\begin{tikzcd}[cramped]
	& {L_\theta K} \\
	{L_\theta} & {L_\theta\cap K} & K \\
	{L_\psi} & {L_\psi\cap K} \\
	& F \\
	& {\mathbb{Q}}
	\arrow[no head, from=1-2, to=2-1]
	\arrow[no head, from=1-2, to=2-2]
	\arrow[no head, from=1-2, to=2-3]
	\arrow[hook', from=2-2, to=2-1]
	\arrow[hook, from=2-2, to=2-3]
	\arrow[no head, from=2-2, to=3-2]
	\arrow[no head, from=2-3, to=4-2]
	\arrow["{P(x)}", no head, from=2-3, to=5-2]
	\arrow[hook', from=3-1, to=2-1]
	\arrow[no head, from=3-1, to=4-2]
	\arrow[hook, from=3-2, to=2-3]
	\arrow[hook', from=3-2, to=3-1]
	\arrow[no head, from=3-2, to=4-2]
	\arrow[no head, from=4-2, to=5-2]
\end{tikzcd}\]

Let $G_\psi$ denote the Galois group of $L_\psi\cap F$. 
We know that $\psi_K=1$ if and only if $\psi \in \widehat{G}$; we claim this can occur if and only if $\psi\in \widehat{G_\psi}$, i.e. if and only if $\psi_{L_\psi\cap K}=1$. This is clear since $\psi$ is a character of $\Gal(L_\psi/F)$, and hence the interpretation of $\psi$ as an element of $\widehat{G}$ must be as a character on $G_\psi$ extended trivially to $G$. 
Now by definition of the ray class field of $L_\psi$, this in fact tells us that $L_\psi = L_\psi\cap K$, and hence we have that $L_\psi\subset K$. 

\begin{lemma}\label{lem: base change trivial}
    Let $\psi$ be a quasi-character of $F$. Then $\psi_K=1$ if and only if $L_\psi\subset K$, where $L_\psi$ denotes the kernel field of $\psi$. 
\end{lemma}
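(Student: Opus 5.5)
We will prove Lemma \ref{lem: base change trivial} with global class field theory, reading both conditions as conditions on the idele class group $C_F=\A_F^\times/F^\times$. Throughout, $\psi$ is a finite-order Hecke character of $F$; this is the only case needed, since $\psi=\theta^2$ with $\theta$ of finite order. Its kernel field $L_\psi$ is the abelian extension of $F$ with $\ker(\psi)=N_{L_\psi/F}(C_{L_\psi})$, so $\psi$ factors through $\Gal(L_\psi/F)$ as a faithful character. The base change is $\psi_K=\psi\circ N_{K/F}$ on $C_K$. Hence $\psi_K=1$ exactly when $N_{K/F}(C_K)\subset\ker(\psi)$. The whole proof is about comparing the two norm subgroups $N_{K/F}(C_K)$ and $N_{L_\psi/F}(C_{L_\psi})$.

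For the direction $L_\psi\subset K\Rightarrow \psi_K=1$, use transitivity of the norm. If $L_\psi\subset K$, then $N_{K/F}=N_{L_\psi/F}\circ N_{K/L_\psi}$, so
\[
N_{K/F}(C_K)\subset N_{L_\psi/F}(C_{L_\psi})=\ker\psi,
\]
which gives $\psi_K=1$. Equivalently, via the reciprocity map, $\psi_K(\frakP)=\psi(\Frob_{N_{K/F}\frakP})$ is $\psi$ evaluated on the image of $\Frob_\frakP\in\Gal(K/F)$ in $\Gal(L_\psi/F)$. When $L_\psi\subset K$, Frobenius restricts compatibly, so one can also argue directly at unramified primes.

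For the converse, suppose $N_{K/F}(C_K)\subset\ker\psi$. Let $K^{ab}$ be the maximal abelian subextension of $K/F$. By the norm limitation theorem,
\[
N_{K/F}(C_K)=N_{K^{ab}/F}(C_{K^{ab}}).
\]
The existence theorem and the order-reversing correspondence between open finite-index subgroups and abelian extensions then give $L_\psi\subset K^{ab}\subset K$, since $\ker\psi$ contains the norm group of $K^{ab}$. I expect this step to be the main obstacle, because $K/F$ need not be abelian. If citing norm limitation is undesirable, argue instead with Chebotarev applied to $L_\psi K/F$. If $\psi_K=1$, then every prime $\frakP$ of $K$ of degree one over $F$, outside a finite set, has $\psi(\fp)=1$ where $\fp=N_{K/F}\frakP$. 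Such $\fp$ split completely in $K$, so they are Frobenius-trivial in $\Gal(K/F)$. These primes have density $1/[K:F]$. Among them, those whose Frobenius in $\Gal(L_\psi/F)$ is trivial are exactly the primes splitting completely in $L_\psi K$. Since $\psi$ is faithful on $\Gal(L_\psi/F)$, the condition $\psi(\fp)=1$ forces $\fp$ to split completely in $L_\psi$. Therefore almost all primes splitting completely in $K$ also split completely in $L_\psi K$. Comparing Chebotarev densities gives $[L_\psi K:F]=[K:F]$, hence $L_\psi\subset K$.

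Finally, I will remark that this recovers the formulation in terms of $\widehat{G}$ used just before the lemma. The statement $\psi\in\widehat{\Gal(K/F)}$, i.e. $\psi$ factors through $\Gal(K/F)$, is the same as $L_\psi\subset K$. This is consistent with Proposition \ref{prop: GL1base change}.
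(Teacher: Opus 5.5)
Your proof is correct, but it takes a different route from the paper's. The paper stays on the Galois side. It uses Proposition~\ref{prop: GL1base change} to read $\psi_K=1$ as $\psi\in\widehat{\Gal(K/F)}$, and it views $\psi$ as a faithful character of $\Gal(L_\psi/F)$. It then notes that such a character can only be a character of $\Gal(K/F)$ by factoring through $\Gal(L_\psi\cap K/F)$, and concludes from faithfulness that $L_\psi=L_\psi\cap K$.

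You instead work with idelic norm groups, rewriting $\psi_K=1$ as $N_{K/F}(C_K)\subset\ker\psi=N_{L_\psi/F}(C_{L_\psi})$. One direction then follows from transitivity of norms. The other follows from the norm limitation theorem plus the order-reversing class field correspondence, or alternatively from a Chebotarev density comparison.

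The paper's route is shorter, but as written it applies a proposition stated only for cyclic $E/F$ to $K/F$. That extension is merely solvable and possibly non-abelian, so ``character of $\Gal(K/F)$'' tacitly means a character of its abelianization. Your norm-limitation step, which reduces to $K^{ab}$, is exactly what justifies this, and it works for any finite $K/F$. Equivalently, and closer to the paper's intent: by compatibility of reciprocity with norms, $\psi_K$ is the restriction of $\psi$ to $\Gal(\overline{F}/K)$. That restriction is trivial if and only if $\Gal(\overline{F}/K)\subset\Gal(\overline{F}/L_\psi)$.

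Your Chebotarev variant avoids norm limitation, but it uses that $K/F$ is Galois. This is needed both for ``such $\fp$ split completely in $K$'' and for the density $1/[K:F]$. It holds here because $K$ is a splitting field over $\Q$ containing $F$, and you should state it.

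Finally, restricting to finite-order $\psi$ loses nothing. If $\psi_K=1$, then $\psi$ is trivial on the finite-index subgroup $N_{K/F}(C_K)$ of $C_F$, so $\psi$ automatically has finite order.
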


Now we recall that in our set up $\psi = \theta^2$; we want to determine when $L_{\theta^2}\subset K$ in order to determine when $\theta^2_K=1$. Instead of analyzing the extension $L_\psi$, we can instead work with $L_\theta$. We can view $\theta^2$ as a character of the Galois group $\Gal(L_\theta/F)$ and see that $\theta^2_K=1$ if and only if $\theta^2_{L_\theta\cap K} =1$, and thus, if and only if $\theta^2 \in \widehat{G_\theta}$ (where $G_\theta = \Gal(L_\theta\cap K/F)$). In turn, this occurs if and only if $\Gal(L_\theta/L_\theta\cap K) \subset \ker(\theta^2),$ where $\ker(\theta^2)$ is considered as a subgroup of $\Gal(L_\theta/F)$. This completes Theorem \ref{thm: noncuspidal} (1).
\begin{remark}
    We note that $\ker(\theta)= \{1\}\subset \Gal(L_\theta/F)$, but we can have that $\{1\}\subsetneq \ker(\theta^2)$. So, this condition does not necessarily imply that $L_\theta\subset K$. 
\end{remark}

It remains to prove the second part of the theorem when $\Gal(L_\theta/L_\theta\cap K) \subset \ker(\theta^2).$ We return to our original sum: 
\begin{align}
    \sum_{n\leq X} |\lambda_\pi(|P(n)|)| &\geq \sum_{\substack{n\leq X \\ P(n) \text{ squarefree}}} |\lambda_\pi(|P(n)|)|\\
    &= \sum_{\substack{n\leq X \\ P(n) \text{ squarefree}}} \left|\sum_{\substack{\fa\subset \calO_F \\ N_{F/\Q}(\fa) = |P(n)|}}\theta(\fa)\right| \\
    &= \sum_{\substack{n\leq X \\ P(n) \text{ squarefree}}} \prod_{\substack{p\mid P(n)\\ \exists \fp\subset \calO_F: N_{F/\Q}(\fp) = p}} |1+\theta^2(\fp)|.
\end{align}
Above, $\fp$ denotes a choice of prime ideal of $\calO_F$ such that $N_{F/\Q}(\fp) = p$. Since $p\mid P(n)$, there exists a $\frakP\subset \calO_K$ such that $N_{K/\Q}(\frakP) = p$ and that $N_{K/F}(\frakP) = \fp$; let $\frakP' = N_{K/L_\theta\cap K}(\frakP)$. Since $\Gal(L_\theta/L_\theta\cap K) \subset \ker(\theta^2)$ and hence $\theta_{L_\theta\cap K}^2 = 1$, we know that 
$$\theta^2(\fp) = \theta^2(N_{L_\theta\cap K/F}(\frakP')) = \theta^2_{L_\theta\cap K}(\frakP') = 1.$$ Consequently, 
\begin{equation}
    \sum_{n\leq X} |\lambda_\pi(|P(n)|)| \geq \sum_{\substack{n\leq X \\ P(n) \text{ squarefree}}} \prod_{\substack{p\mid P(n) \\ \exists \fp: N_{F/\Q}(\fp) = p}} 2 = \sum_{\substack{n\leq X \\ P(n)\text{ squarefree}}} \sum_{\substack{\fa\subset \calO_F\\ N_{F/\Q}(\fa) = |P(n)|}}1.
\end{equation}
This completes the proof of Theorem \ref{thm: noncuspidal} (2).
\end{proof}

\bibliographystyle{abbrv}
\bibliography{biblio}

\end{document}